\definecolor{forestgreen(traditional)}{rgb}{0.0, 0.27, 0.13}
\definecolor{forestgreen(web)}{rgb}{0.13, 0.55, 0.13}
\definecolor{airforceblue}{rgb}{0.36, 0.54, 0.66}
\newtheorem{thm}{Theorem}[section]
\newtheorem*{thm*}{Theorem}
\newtheorem*{thmB_again}{Theorem 1.2}
\newtheorem{lemma}[thm]{Lemma}
\newtheorem{proposition}[thm]{Proposition}
\newtheorem{corollary}[thm]{Corollary}
\theoremstyle{definition}
\newtheorem{definition}{Definition}
\newtheorem{question}{Question}
\newtheorem{remark}[thm]{Remark}
\newtheorem{convention and reminder}{Convention and Reminder}
\newtheorem{convention and remark}{Convention and Remark}
\newtheorem{definition and remark}{Definition and Remark}
\newtheorem{reminders and definition}{Reminders and Definition}
\newtheorem{notation and remarks}{Notation and Remarks}
\newtheorem{notation and remark}{Notation and Remark}
\newtheorem{example}[thm]{Example}
\newcommand \sat {\mathrm {sat}}
\newcommand \N {\ensuremath{\mathrm{\bf N}}}
\newcommand \ini {\ensuremath{\mathrm{in}}}
\newcommand \ch {\ensuremath{\mathrm{ch}}}
\newcommand \coker {\ensuremath{\mathrm{coker}}}
\newcommand\Proj{\operatorname{Proj}}
\newcommand\length{\operatorname{length}}
\newcommand\mult{\operatorname{mult}}
\newcommand\Tor{\operatorname{Tor}}
\newcommand\depth{\operatorname{depth}}
\newcommand\reg{\operatorname{reg}}
\newcommand\Supp{\operatorname{Supp}}
\newcommand\Sec{\operatorname{Sec}}
\newcommand\Tan{\operatorname{Tan}}
\newcommand\gin{\operatorname{gin}}
\newcommand\Gin{\operatorname{Gin}}
\newcommand\ND{\mathrm{ND}}
\newcommand\Sing{\operatorname{Sing}}
\newcommand\Ker{\operatorname{Ker}}
\newcommand \pd {\operatorname{proj.dim}}
\newcommand \st[1] {\stackrel{#1}{\rightarrow}}
\def\ds{\displaystyle}
\def\P{{\mathbb P}}
\def\Z{{\mathbb Z}}
\def\kk{{\Bbbk}}
\newcommand \tb[1]{\textcolor{blue}{#1}}
\newcommand \ti[1]{\textit{#1}}
\begin{document}

\title[Condition ${\mathrm ND}(\ell)$ and the first non-trivial strand of syzygies]{On the first non-trivial strand of syzygies of projective schemes and  Condition ${\mathrm ND}(\ell)$}
\author[J.\ Ahn, K.\ Han and S.\ Kwak] {Jeaman Ahn, Kangjin Han${}^{*}$, Sijong Kwak}
\address{Jeaman Ahn\\
Department of Mathematics Education, Kongju National University, 182, Shinkwan-dong, Kongju, Chungnam 314-701, Republic of Korea}
\email{jeamanahn@kongju.ac.kr}
\address{Kangjin Han\\
School of Undergraduate Studies,
Daegu-Gyeongbuk Institute of Science \& Technology (DGIST),
333 Techno jungang-daero, Hyeonpung-myeon, Dalseong-gun
Daegu 42988,
Republic of Korea}
\email{kjhan@dgist.ac.kr}
\address{Sijong Kwak\\
Department of Mathematics, Korea Advanced Institute of Science and Technology (KAIST), 373-1 Gusung-dong, Yusung-Gu, Daejeon, Republic of Korea}
\email{skwak@kaist.ac.kr}
\thanks{${}^{*}$ Corresponding author.}
\thanks{The first author was supported by Basic Science Research Program through the National Research Foundation of Korea (NRF) funded by the Ministry of Education, Science, and Technology (No.2019R1F1A1058684). The second author was supported by the POSCO Science Fellowship of POSCO TJ Park Foundation
and the DGIST Start-up Fund of the Ministry of Science, ICT and Future Planning (No.2016010066). The third author was supported by Basic Science Research Program through NRF funded by the Ministry of Science and ICT(No.2015R1A2A2A01004545).}

\date{\today}

%\\
%2000 Mathematics Subject Classification:

\begin{abstract}
Let $X\subset\P^{n+e}$ be any $n$-dimensional closed subscheme. We are mainly interested in two notions related to syzygies: one is the property $\N_{d,p}~(d\ge 2, ~p\geq 1)$, which means that $X$ is $d$-regular up to $p$-th step in the minimal free resolution and the other is a new notion $\mathrm{ND}(\ell)$ which generalizes the classical ``being nondegenerate'' to the condition that requires a general finite linear section  not to be contained in any hypersurface of degree $\ell$. 

First, we introduce condition $\mathrm{ND}(\ell)$ and consider examples and basic properties deduced from the notion. Next we prove sharp upper bounds on the graded Betti numbers of the first non-trivial strand of syzygies, which generalize results in the quadratic case to higher degree case, and provide characterizations for the extremal cases. Further, after regarding some consequences of property $\N_{d,p}$, we characterize the resolution of $X$ to be $d$-linear arithmetically Cohen-Macaulay as having property $\N_{d,e}$ and condition $\mathrm{ND}(d-1)$ at the same time. From this result, we obtain a syzygetic rigidity theorem which suggests a natural generalization of syzygetic rigidity on $2$-regularity due to Eisenbud-Green-Hulek-Popescu to a general $d$-regularity.
\end{abstract}
\keywords{graded Betti numbers, higher linear syzygies, condition $\mathrm{ND}(\ell)$, property $\mathbf{N}_{d,p}$, arithmetically Cohen-Macaulay, Castelnuovo-Mumford regularity}
\subjclass[2010]{Primary:~14N05,~13D02; Secondary:~51N35}
\maketitle
\tableofcontents \setcounter{page}{1}
%%%%%%%%%%%%%%%%%%%%%%%%%%%%%%

\section{Introduction}\label{sec_into} Since the foundational paper on syzygy computation by Green (\cite{G2}), there has been a great deal of interest and progress
in understanding the structure of the Betti tables of algebraic varieties during the past decades. In particular,
the first non-trivial linear strand starting from quadratic equations has been intensively studied by several authors
(\cite{C}, \cite{G2}, \cite{GL}, \cite{EGHP1,EGHP2}, \cite{EL}, \cite{HK2} etc.).

Let $X$ be any nondegenerate $n$-dimensional closed subscheme $X$ in a projective space $\P^{n+e}$ defined over an algebraically closed field $\kk$ of any characteristic and $R=\kk[x_0,\ldots,x_{n+e}]$. In this article, we are mainly interested in two notions related to syzygies of $X$. One notion is the property $\N_{d,p}~(d\ge 2, ~p\geq 1)$, which was first introduced in \cite{EGHP1} and means that $X$ is $d$-regular up to $p$-th step in the minimal free resolution. To be precise, $X$ is said to satisfy property $\N_{d,p}$ if the following condition holds:
$$\beta_{i,j}(X):=\dim_{\kk}\Tor^R_{i}(R/I_X, \kk)_{i+j}=0 \quad \text{ for }  i\leq p \text{ and } j\geq d.$$

The other one is a new notion \textit{condition $\mathrm{ND}(\ell)$}, which generalizes the classical ``being nondegenerate'' in degree one to cases of higher degrees. More precisely, it means that a general linear section $X\cap \Lambda$ is not contained in any hypersurface of degree $\ell$ of $\Lambda$, where $\Lambda$ is a general linear subspace of each dimension $\geq e$. So, for irreducible varieties the classical nondegenerate condition is equivalent to condition $\mathrm{ND}(1)$ by Bertini-type theorem. We give many examples and basic properties on condition $\ND(\ell)$.

\begin{figure}[!htb]
\centering
\begin{subfigure}{.44\textwidth}
\captionsetup{width=0.99\textwidth}
\texttt{
\begin{tabular}{l|cccccccccccccccccccccccc}
          & 0 & 1   &  $\cdots$    & p    &  p+1 & $\cdots$   &  e   & e+1   &  $\cdots$ & n+e     \\ \hline
    0    & 1 & -    & -      & -      &  -   & -     &- &   -  &   -&   -     \\
    1   & - & $\ast$  & $\ast$ & $\ast$  & $\ast$ &$\ast$ & $\ast$ &$\ast$ &$\ast$ &$\ast$ \\
\vdots    & \vdots  & \vdots & \vdots & \vdots & \vdots & \vdots &\vdots & \vdots & \vdots & \vdots \\
d-2 & -  & $\ast$ & $\ast$ & $\ast$ & $\ast$ & $\ast$ &$\ast$ &$\ast$ &$\ast$ &$\ast$ \\
    d-1    & -  & $\ast$ & $\ast$ & $\ast$ & $\ast$ & $\ast$ &$\ast$ &$\ast$ &$\ast$ &$\ast$ \\
    d    & -  & - & - & - & $\ast$ &$\ast$ &$\ast$ & $\ast$ &$\ast$ &$\ast$ \\
d+1  & - & -    & -      & -  & $\ast$ &$\ast$ & $\ast$ &$\ast$ &$\ast$ &$\ast$ \\
\vdots    & \vdots  & \vdots & \vdots & \vdots & \vdots & \vdots &\vdots & \vdots & \vdots & \vdots \\
    \end{tabular}}
%\caption{Betti table with property $\mathbf{N}_{d,p}$}
\label{fig_Ndp}
\end{subfigure}
\hspace*{\fill}
%\hfill
%\hspace{0.5cm}
\begin{subfigure}{.44\textwidth}
\captionsetup{width=0.99\textwidth}
%\texttt{
%\begin{tabular}{l|cccccccccccccccccccccccc}
%          & 0 & 1   &  $\cdots$    &  $\cdots$ & e-1  &  e   & e+1   &  $\cdots$ & n+e     \\ \hline
%   0    & 1 & -    & -      & -        & -     &- &   -  &   -&   -     \\
%   1    & - & -    & -      & -        & -     &- &   -  &   -&   -     \\
%\vdots    & \vdots  & \vdots  & \vdots & \vdots & \vdots & \vdots &\vdots & \vdots & \vdots \\
%d-2    & - & -    & -      & -         & -     &- &   -  &   -&   -     \\
%    d-1    & -  & $\ast$ & $\ast$  & $\ast$ & $\ast$ &$\ast$ & $\ast$ & $\ast$ & $\ast$ \\
%    d    & - & $\ast$    & $\ast$       & $\ast$          & $\ast$      &$\ast$  &   $\ast$   & $\ast$ & $\ast$      \\
%    d+1    & - & $\ast$     & $\ast$    &$\ast$        & $\ast$    &$\ast$ &   $\ast$   & $\ast$ &  $\ast$     \\
%\vdots    & \vdots  & \vdots & \vdots & \vdots & \vdots & \vdots &\vdots & \vdots & \vdots \\
%    \end{tabular}}
%\caption{Betti table of any projective scheme $X$ with condition $\mathrm{ND}(d-1)$-{\color{red} The form of BT is preserved under general hyperplane sections}}
%\label{fig_ND(d-1)}
%\end{subfigure}
%\caption{Typical shapes of Betti table under condition $\mathbf{N}_{d,p}$ and $\mathrm{ND}(d-1)$}
%\end{figure}
\texttt{
\begin{tabular}{l|cccccccccccccccccccccccc}
          & 0 & 1   &  $\cdots$    &  $\cdots$ & e-1  &  e   & e+1   &  $\cdots$ & n+e     \\ \hline
   0    & 1 & -    & -      & -        & -     &- &   -  &   -&   -     \\
   1    & - & -    & -      & -        & -     &- &   -  &   -&   -     \\
\vdots    & \vdots  & \vdots  & \vdots & \vdots & \vdots & \vdots &\vdots & \vdots & \vdots \\
$\ell$-1    & - & -    & -      & -         & -     &- &   -  &   -&   -     \\
    $\ell$    & -  & $\ast$ & $\ast$  & $\ast$ & $\ast$ &$\ast$ & $\ast$ & $\ast$ & $\ast$ \\
    $\ell$+1    & - & $\ast$    & $\ast$       & $\ast$          & $\ast$      &$\ast$  &   $\ast$   & $\ast$ & $\ast$      \\
    $\ell$+2    & - & $\ast$     & $\ast$    &$\ast$        & $\ast$    &$\ast$ &   $\ast$   & $\ast$ &  $\ast$     \\
\vdots    & \vdots  & \vdots & \vdots & \vdots & \vdots & \vdots &\vdots & \vdots & \vdots \\
    \end{tabular}}
    %\caption{Betti table with condition $\mathrm{ND}(\ell)$}
    \label{fig_ND(d-1)}
\end{subfigure}
\caption{Two typical Betti tables $\mathbb{B}(X)$ of $X\subset\P^{n+e}$ with property $\mathbf{N}_{d,p}$ and with condition $\mathrm{ND}(\ell)$. Note that the shape of $\mathbb{B}(X)$ with $\mathrm{ND}(\ell)$ is preserved under taking general hyperplane sections and general linear projections.}\label{fig_ND(d-1)}
\end{figure}

With this notion, we obtain a new angle to study syzygies of high degrees in the Betti table $\mathbb{B}(X)$. Especially, it turns out to be very effective to understand the first non-trivial $\ell$-th linear strand arising from equations of degree $\ell+1$ and also to answer many interesting questions which can be raised as compared to the classical quadratic case.

%{\bf Equations and syzygies in the first non-trivial $d$-th linear strand} %\tte{State our problems with history and previous results.}
To review previous results for the quadratic case, let us begin by recalling the well known theorems due to Castelnuovo and Fano:
Let $X\subset \P^{n+e}$ be any ``nondegenerate'' irreducible variety.
\begin{itemize}
\item (Castelnuovo, 1889) $h^0(\mathcal I_{X}(2)) \le {e+1\choose 2}$ and $``="$ holds iff $X$ is a variety of minimal degree.
\item (Fano, 1894) Unless $X$ is a variety of minimal degree, $h^0(\mathcal I_{X}(2)) \le {e+1\choose 2}-1$ and $``="$ holds iff $X$ is a del Pezzo variety
(i.e. arithmetically Cohen-Macaulay and $\deg(X)=e+2$).
\end{itemize}

A few years ago, Han and Kwak developed an inner projection method to compare syzygies of $X$ with those of its projections by using the theory of mapping cone and partial elimination ideals. As applications, over any algebraically closed field $\kk$ of arbitrary characteristic, they proved the sharp upper bounds on the ranks of higher linear syzygies by quadratic equations, and characterized the extremal and next-to-extremal cases, which generalized the results of Castelnuovo and Fano \cite{HK2}:

\begin{itemize}
\item (Han-Kwak, 2015) $\beta_{i,1}(X)\leq i{e+1 \choose i+1}, ~~i\ge 1$
and the equality holds for some $1\le i \le e$ iff $X$ is a variety of minimal degree (abbr. VMD);
\item Unless $X$ is a variety of minimal degree, then $\beta_{i,1}(X)\leq i{e+1 \choose i+1}-{e\choose i-1} ~~\forall i\le e$
and the equality holds for some $1\le i\le e-1$ iff $X$ is a del Pezzo variety.
\end{itemize}
Thus, the theorem above by Han-Kwak can be thought of as a syzygetic characterization of varieties of minimal degree and del Pezzo varieties.

It is worth to note here that the condition $(I_X)_1=0$ (i.e. to be ``nondegenerate'') implies not only an upper bound for the number of quadratic equations $h^0(\mathcal I_{X}(2)) \le {e+1\choose 2}$ as we reviewed, but also on the degree of $X$ via the so-called `basic inequality' $\deg(X)\ge{e+1\choose 1}$. Thus, for `more' nondegenerate varieties, it seems natural to raise a question as follows: For any irreducible variety $X$ with $(I_X)_2=0$ (i.e. having no linear and quadratic forms vanishing on $X$),
\begin{center}
`` Does it hold that $h^0(\mathcal I_{X}(3)) \le {e+2\choose 3}$ and $\deg(X)\ge{e+2\choose 2}$? ''
\end{center}

But, there is a counterexample for this question : the Veronese surface $S\subset \P^4$ ($e=2$) i.e. an isomorphic projection of $\nu_2(\P^2)$, one of the Severi varieties classified by Zak, where $S$ has no quadratic equations on it, but $h^0(\mathcal I_{S}(3))=7 \nleq {2+2\choose 3}$ and $\deg(X)=4\ngeq{2+2\choose 2}$. One reason for the failure is that a general hyperplane section of $S$ sits on a quadric hypersurface while $S$ itself does not. It leads us to consider the notion of condition $\mathrm{ND}(\ell)$.

Under condition $\mathrm{ND}(\ell)$ it can be easily checked that the degree of $X$ satisfies the expected bound $\deg(X)\ge\binom{e+\ell}{\ell}$ (see Remark \ref{basic_properties}). Further, one can see that condition $\mathrm{ND}(\ell)$ is determined by the \textit{injectivity} of the restriction map $H^{0}(\mathcal{O}_{\Lambda}(\ell))\to H^{0}(\mathcal{O}_{X\cap\Lambda}(\ell))$ for a general point section $X\cap\Lambda$ which can happen in larger degree for a given $\ell$, while the problem on `imposing independent conditions on $\ell$-forms (or $\ell$-normality)' concerns surjectivity of the above map in degree at most $\binom{e+\ell}{\ell}$. The latter has been intensively studied in many works in the literature (see e.g. \cite{CHMN} and references therein), but the former has not been considered well.  

With this notion, we can also obtain sharp upper bounds on the numbers of defining equations of degree $\ell+1$ and the graded Betti numbers for their higher linear syzygies. As in the quadratic case, we prove that the extremal cases for these Betti numbers are only arithmetically Cohen-Macaulay (abbr. ACM) varieties with $(\ell+1)$-linear resolution (we call a variety $X\subset \P^N$ ACM if its homogeneous coordinate ring $R_X$ is arithmetically Cohen-Macaulay i.e. $\depth(R_X)=\dim(X)+1$).

Now, we present our first main result.
%\begin{thm}\label{thm A}
%Let $X$ be any closed subscheme of codimension $e$ satisfying $\mathrm{ND}(d-1)$ for some $d\ge2$ in $\P^{n+e}$ over an algebraically closed field $\kk$ with $\ch(\kk)=0$.
%Then, we have
%\begin{itemize}
%\item[(a)] $\beta_{i,d-1}(X)\le{i+d-2\choose d-1}{e+d-1\choose i+d-1}$.
%\item[(b)] The following are equivalent:
%\begin{itemize}
%\item[(i)] $\beta_{i,d-1}(X)={i+d-2\choose d-1}{e+d-1\choose i+d-1}$ for all $1\le i \le e$;
%\item[(ii)] $\beta_{i,d-1}(X)={i+d-2\choose d-1}{e+d-1\choose i+d-1}$ for some $i$ among $1\le i\le e$;
%\item[(iii)] $\mathrm{Gin}_{\tau}(I_X)=(x_0,x_1,\cdots,x_{e-1})^{d}$ where $\tau$ represents a graded reverse lexicographic order;
%\item[(iv)] $X$ is arithmetically Cohen-Macaulay~(ACM) with $d$-linear resolution.
%\end{itemize}
%In this case, $X$ has a minimal degree ${e+d-1\choose d-1}$.
%\end{itemize}
%\end{thm}
\begin{thm}\label{thm A}
Let $X$ be any closed subscheme of codimension $e$ satisfying condition $\mathrm{ND}(\ell)$ for some $\ell\ge 1$ in $\P^{n+e}$ over an algebraically closed field $\kk$ with $\ch(\kk)=0$.
Then, we have
\begin{itemize}
\item[(a)] $\beta_{i,\ell}(X)\le{i+\ell-1\choose \ell}{e+\ell\choose i+\ell}$ for all $i \ge 1$.
\item[(b)] The following are equivalent:
\begin{itemize}
\item[(i)] $\beta_{i,\ell}(X)={i+\ell-1\choose \ell}{e+\ell \choose i+\ell}$ for all $i \ge 1$;
\item[(ii)] $\beta_{i,\ell}(X)={i+\ell-1\choose \ell}{e+\ell\choose i+\ell}$ for some $i$ among $1\le i\le e$;
%\item[(iii)] $\mathrm{Gin}_{\tau}(I_X)=(x_0,x_1,\cdots,x_{e-1})^{d}$ where $\tau$ represents a graded reverse lexicographic order;
\item[(iii)] $X$ is arithmetically Cohen-Macaulay with $(\ell+1)$-linear resolution.
\end{itemize}
In particular, if $X$ satisfies one of equivalent conditions then $X$ has a minimal degree ${e+\ell\choose \ell}$.
\end{itemize}
\end{thm}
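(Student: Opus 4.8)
The plan is to pass to the generic initial ideal. Since $\ch(\kk)=0$, after a generic change of coordinates the ideal $J:=\gin(I_X)\subset R=\kk[x_0,\dots,x_{n+e}]$ with respect to the reverse-lexicographic order is Borel-fixed, and $\beta_{i,j}(R/I_X)\le\beta_{i,j}(R/J)$ for all $i,j$; so it suffices to bound the graded Betti numbers of the monomial ideal $J$ in the strand $j-i=\ell$, where the Eliahou--Kervaire formula applies. The whole content of the theorem is then to read condition $\mathrm{ND}(\ell)$ as a constraint on the degree-$(\ell+1)$ monomials of $J$. First, $\mathrm{ND}(\ell)$ forces $(I_X)_{\le\ell}=0$ (restrict a low-degree hypersurface through $X$ to a general $\Lambda$ and pad), hence $J_{\le\ell}=0$, so the degree-$(\ell+1)$ monomials of $J$ are exactly its minimal generators in that degree.

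The key lemma is that, under $\mathrm{ND}(\ell)$, \emph{every degree-$(\ell+1)$ monomial $u$ of $J$ involves only $x_0,\dots,x_{e-1}$}. To prove it I would use Green's description of generic initial ideals under general linear sections together with the fact that $\gin$ commutes with saturation: for a general $\Lambda$ of dimension $e+k$ with $0\le k\le n$ one has, in the inherited (still generic) coordinates, $\operatorname{in}(I_{X\cap\Lambda})=\bigl(J\cap\kk[x_0,\dots,x_{e+k}]\bigr)^{\mathrm{sat}}$, the saturation taken with respect to $x_{e+k}$. If some $u\in J_{\ell+1}$ had $\max(u)=e+k$, write $u=m\,x_{e+k}^{a}$ with $a\ge1$ and $m\in\kk[x_0,\dots,x_{e+k-1}]$ of degree $\ell+1-a\le\ell$; then $m$, and hence (padding by a power of $x_0$) a nonzero form of degree exactly $\ell$, lies in $\operatorname{in}(I_{X\cap\Lambda})$, so $X\cap\Lambda$ sits on a degree-$\ell$ hypersurface of $\Lambda$, contradicting $\mathrm{ND}(\ell)$. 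Letting $k$ run from $0$ to $n$ forces $\max(u)\le e-1$. Making this translation precise -- in particular the compatibility of ``$\Lambda$ general'' with ``generic coordinates for $\gin$'', and the behaviour of initial ideals under generic restriction and under saturation -- is the main obstacle; everything afterwards is bookkeeping.

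Granting the lemma, part (a) is immediate from Eliahou--Kervaire:
\[
\beta_{i,\ell}(X)\ \le\ \beta_{i,\ell}(R/J)\ =\!\!\sum_{\substack{u\in G(J)\\ \deg u=\ell+1}}\!\!\binom{\max(u)}{i-1}\ \le\ \sum_{m=0}^{e-1}\binom{m+\ell}{\ell}\binom{m}{i-1}\ =\ \binom{i+\ell-1}{\ell}\binom{e+\ell}{i+\ell},
\]
the last quantity being exactly $\beta_{i,\ell}$ of $\kk[x_0,\dots,x_{e-1}]/(x_0,\dots,x_{e-1})^{\ell+1}$ (the linear resolution of a power of a linear ideal), which also shows the bound is sharp.

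For part (b), $(\mathrm{i})\Rightarrow(\mathrm{ii})$ is trivial and $(\mathrm{iii})\Rightarrow(\mathrm{i})$ is a computation: an $(\ell+1)$-linear resolution of an arithmetically Cohen--Macaulay $X$ of codimension $e$ is pure of length $e$ with twists $\ell+1,\dots,\ell+e$, so the Herzog--K\"uhl equations give $\beta_{i,\ell}(X)=\binom{i+\ell-1}{\ell}\binom{e+\ell}{i+\ell}$ and the Huneke--Miller multiplicity formula gives $\deg(X)=\tfrac1{e!}\prod_{i=1}^{e}(\ell+i)=\binom{e+\ell}{\ell}$. The essential implication is $(\mathrm{ii})\Rightarrow(\mathrm{iii})$. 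If $\beta_{i_0,\ell}(X)$ attains the bound for some $i_0$, then all the inequalities above are equalities; equality in the second forces $J_{\ell+1}$ to contain every degree-$(\ell+1)$ monomial in $x_0,\dots,x_{e-1}$ with $\max\ge i_0$, and since $J_{\ell+1}$ is Borel-fixed its Borel closure already exhausts $(x_0,\dots,x_{e-1})^{\ell+1}$ in degree $\ell+1$; hence $(x_0,\dots,x_{e-1})^{\ell+1}\subseteq J$. Comparing Hilbert polynomials gives $\deg(X)\le\binom{e+\ell}{\ell}$, and since $\mathrm{ND}(\ell)$ gives $\deg(X)\ge\binom{e+\ell}{\ell}$ the two ideals share the same Hilbert polynomial; as $R/(x_0,\dots,x_{e-1})^{\ell+1}$ has positive depth it has no nonzero finite-length submodule, so $J=(x_0,\dots,x_{e-1})^{\ell+1}$. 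Finally $\reg(I_X)=\reg(J)=\ell+1$ together with $\beta_{i,j}(R/I_X)\le\beta_{i,j}(R/J)$ force $\pd(R/I_X)\le e$, hence $=e$ by Auslander--Buchsbaum, so $X$ is arithmetically Cohen--Macaulay; combined with $(I_X)_{\le\ell}=0$ and $\reg(R/I_X)=\ell$ this yields the $(\ell+1)$-linear resolution, and $\deg(X)=\binom{e+\ell}{\ell}$ as already observed.
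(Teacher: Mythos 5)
Your strategy is the same as the paper's: pass to $J=\Gin(I_X)$ in reverse‑lex order, translate condition $\mathrm{ND}(\ell)$ into a statement about the degree‑$(\ell+1)$ monomials of $J$, and run the Eliahou--Kervaire formula against the Borel‑fixed ideal $J_0=(x_0,\dots,x_{e-1})^{\ell+1}$, whose Eagon--Northcott resolution supplies the extremal numbers. Part (a), the equality analysis in (ii)$\Rightarrow$(iii) (equality forces every degree‑$(\ell+1)$ monomial with large enough $\max$ to appear, in particular $x_{e-1}^{\ell+1}\in J$, and Borel‑fixedness then gives $J_0\subseteq J$ in degree $\ell+1$), and (iii)$\Rightarrow$(i) all match the paper step for step, up to the choice of reference (Herzog--K\"uhl versus Eisenbud--Goto). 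The one place you diverge is how you pass from $J_0\subseteq J$ to $J=J_0$. The paper proves a stronger form of your key lemma: not merely that the degree‑$(\ell+1)$ monomials of $J$ lie in $\kk[x_0,\dots,x_{e-1}]$, but that $J\subseteq (x_0,\dots,x_{e-1})^{\ell+1}$ in \emph{all} degrees (the Borel move $T=NM\mapsto Nx_e^{\deg M}$ pushes every generator down to the zero‑dimensional section, where $\mathrm{ND}(\ell)$ forces $\deg N\ge\ell+1$). With that, $J=J_0$ is immediate because $J_0$ is generated in degree $\ell+1$, and no degree computation is needed.

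Your substitute for this step has a genuine, though repairable, flaw. From $J_0\subseteq J$ and the $\mathrm{ND}(\ell)$ bound $\deg(X)\ge\binom{e+\ell}{\ell}=\e(R/J_0)$ you only learn that the Hilbert polynomials of $R/J$ and $R/J_0$ agree in degree and leading coefficient, i.e.\ $\dim(J/J_0)\le n$; they need not literally coincide. You then dispose of $J/J_0$ by saying $R/J_0$ has positive depth and hence no nonzero finite‑length submodule --- but when $n\ge 1$ the submodule $J/J_0\subseteq R/J_0$ could a priori have any dimension between $1$ and $n$, and positive depth rules out only the dimension‑$0$ case. What you actually need is that $R/J_0$ is unmixed: it is Cohen--Macaulay (length‑$e$ Eagon--Northcott resolution plus Auslander--Buchsbaum), so its unique associated prime is $(x_0,\dots,x_{e-1})$, whence every nonzero submodule has dimension $n+1$ and positive multiplicity, contradicting $\dim(J/J_0)\le n$. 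With that correction the step closes, and your finish (Betti semicontinuity gives $\pd(R/I_X)\le e$, hence ACM, and $(I_X)_{\le\ell}=0$ together with $\reg(I_X)=\ell+1$ gives linearity) is a valid alternative to the paper's one‑line appeal to the cancellation principle, which yields $\mathbb{B}(R/I_X)=\mathbb{B}(R/J_0)$ directly because $R/J_0$ has linear resolution.
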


We would like to note that if $\ell=1$, then this theorem recovers the previous results on the linear syzygies by quadrics for the case of integral varieties (see also Remark \ref{thm A_recover_HK}). In general, the set of closed subschemes satisfying $\mathrm{ND}(1)$ is much larger than that of \textit{nondegenerate} irreducible varieties (see \cite[section 1]{AH} for details). Furthermore, a closed subscheme $X$ (with possibly many components) has condition $\mathrm{ND}(\ell)$ if so does the top-dimensional part of $X$. Note that the Betti table $\mathbb{B}(X)$ is usually very sensitive for addition some components to $X$ (e.g. when we add points to a rational normal curve, Betti table can be totally changed, see e.g. \cite[example 3.10]{AK2}). But condition $\mathrm{ND}(\ell)$ has been still preserved under such addition of low dimensional components (thus, we could make many examples with condition $\mathrm{ND}(\ell)$ in this way).

On the other hand, if $X$ satisfies property $\mathbf{N}_{d,e}$, then the degree of $X$ is at most $\binom{e+d-1}{d-1}$ and the equality happens only when $X$ has ACM $d$-linear resolution. We prove this by establishing a syzygetic B\'ezout Theorem (Theorem~\ref{thm:20160115-01}), a geometric implication of property $\mathbf{N}_{d,p}$ using projection method. We also investigate an effect of $\mathbf{N}_{d,p}$ on loci of $d$-secant lines (Theorem~\ref{Loci_d_secant_lines}).

Furthermore, if two notions - condition $\mathrm{ND}(d-1)$ and property $\mathbf{N}_{d,e}$ on $X$ - meet together, then the degree of $X$ should be equal to $\binom{e+d-1}{d-1}$ and $X$ has ACM $d$-linear resolution (in particular, $X$ is $d$-regular). From this point of view, we can obtain another main result, a syzygetic rigidity for $d$-regularity as follows:

\begin{thm}[Syzygetic rigidity for $d$-regularity]\label{syz_rigid} Let $X$ be any algebraic set of codimension $e$ in $\P^{n+e}$ satisfying condition $\mathrm{ND}(d-1)$ for $d\ge2$. If $X$ has property $\N_{d,e}$, then $X$ is $d$-regular (more precisely, $X$ has ACM $d$-linear resolution).
\end{thm}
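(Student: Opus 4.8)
The plan is to bracket the degree of $X$ between two bounds that are already in hand and then read off the conclusion from the rigidity clause of the upper one; in particular this avoids Theorem~\ref{thm A} (so the characteristic hypothesis there is not needed).

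\emph{Lower bound from $\ND(d-1)$.} Let $\Lambda\cong\P^{e}$ be a general linear subspace of $\P^{n+e}$. Then $\Gamma:=X\cap\Lambda$ is a finite subscheme of $\Lambda$ of length $\deg X$, and by the definition of condition $\ND(d-1)$ it lies on no hypersurface of degree $d-1$ — hence on none of degree $\le d-1$ — of $\Lambda$. Thus $(I_{\Gamma})_{d-1}=0$, so $h_{\Gamma}(d-1)=\dim_{\kk}\kk[\Lambda]_{d-1}=\binom{e+d-1}{d-1}$; since the Hilbert function of a finite scheme never exceeds its length, this gives
\[
\deg X=\operatorname{length}(\Gamma)\ \ge\ \binom{e+d-1}{d-1},
\]
which is exactly the inequality recorded after Theorem~\ref{thm A}, valid in any characteristic.

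\emph{Upper bound from $\N_{d,e}$, and conclusion.} The syzygetic B\'ezout theorem (Theorem~\ref{thm:20160115-01}), applied with $p=e$, yields $\deg X\le\binom{e+d-1}{d-1}$, with equality only if $X$ is arithmetically Cohen--Macaulay with $d$-linear resolution. Comparison with the previous paragraph forces
\[
\deg X=\binom{e+d-1}{d-1},
\]
so the equality clause applies and $X$ is ACM with $d$-linear resolution; in particular $\reg(R/I_X)=d-1$, i.e.\ $X$ is $d$-regular. This proves the theorem.

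\emph{Where the difficulty lies.} The deduction itself is short; the substance is entirely in the two inputs. The lower bound is elementary, via the general finite section. The upper bound together with its rigidity clause is the serious ingredient — the geometric, B\'ezout-type consequence of property $\N_{d,e}$ obtained earlier through the inner-projection/mapping-cone method and partial elimination ideals — and is where I expect the real work to sit. As a complement, note that the ``$d$-linear'' shape of the first $e$ columns of $\mathbb{B}(X)$ is already visible without any degree count: condition $\ND(d-1)$ forces $(I_X)_{k}=0$ for $k\le d-1$, and minimality of the resolution propagates this (so $F_i$ is generated in degrees $\ge d+i-1$), whence property $\N_{d,e}$ pins every $\beta_{i,j}(X)$ with $1\le i\le e$ to the single row $j=d-1$; the real content of the theorem is then that $\ND(d-1)$ additionally forces the resolution to terminate at step $e$ — equivalently, that $X$ is ACM — which is precisely what the degree sandwich supplies.
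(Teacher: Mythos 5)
Your argument is correct and is essentially the paper's own proof: the paper likewise sandwiches $\deg(X)$ between the lower bound $\binom{e+d-1}{d-1}$ coming from condition $\mathrm{ND}(d-1)$ and the upper bound $\binom{d-1+e}{e}$ coming from property $\N_{d,e}$ via the syzygetic B\'ezout theorem, and then invokes the extremal characterization to conclude that $X$ is ACM with $d$-linear resolution.

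The one claim I would push back on is the parenthetical assertion that this route avoids Theorem~\ref{thm A} and hence its characteristic-zero hypothesis. Theorem~\ref{thm:20160115-01} itself only provides the length bound, plus (in the equality case) the statement that the multisecant space lies in the base locus of $|H^0(\mathcal I_{X}(k))|$ for $k\le d-1$; the clause ``equality implies $X$ is ACM with $d$-linear resolution'' is Corollary~\ref{cor:bezout}, and the paper's proof of that clause runs through Theorem~\ref{thm A}: from maximality of the degree one deduces condition $\mathrm{ND}(d-1)$, then uses Theorem~\ref{thm A}(a) together with $\beta^{S_e}_{0,d-1}\le\beta_{e,d-1}(X)$ to pin down $\beta_{e,d-1}(X)={e+d-2\choose d-1}$, and finally applies the extremal case of Theorem~\ref{thm A}(b). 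So unless you supply an independent, characteristic-free proof of that rigidity clause, the hypothesis $\ch(\kk)=0$ is still in force --- and indeed the paper restates the theorem in Section~3 with exactly that hypothesis. Your closing observation about the shape of the first $e$ columns of the Betti table is correct, and it is a fair summary that the entire content is the termination of the resolution at step $e$.
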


Note that if $d=2$, for nondegenerate algebraic sets this theorem recovers the syzygetic rigidity for $2$-regularity due to Eisenbud-Green-Hulek-Popescu (\cite[corollary 1.8]{EGHP1}) where the condition $\mathrm{ND}(1)$ was implicitly used. In \cite{EGHP1}, the rigidity for $2$-regularity was obtained using the classification of so-called `small' schemes in the category of algebraic sets in \cite{EGHP2}. But, for next $3$ and higher $d$-regular algebraic sets, it seems out of reach to get such classifications at this moment. From this point of view, Theorem \ref{syz_rigid} is a natural generalization and gives a more direct proof for the rigidity.  

We would like to also remark that for a generalization of this syzygetic rigidity into higher $d$, one needs somewhat a sort of `higher nondegeneracy condition' such as the condition $\mathrm{ND}(\ell)$, because there exist some examples where Theorem \ref{syz_rigid} does not hold without condition $\mathrm{ND}(\ell)$ even though the given $X$ is an irreducible variety and there is no forms of degree $\ell$ vanishing on $X$ (see Figure \ref{ND(l)meetsNdp} and Example \ref{non_ex_syz_rigidity}).

%If $d=2$ then it is classically well known that this kind of projective schemes should be an ACM with $2$-linear resolution. In this paper, we prove that this is true for all $d\geq 2$.  

\begin{figure}[!htb]
\centering
\begin{subfigure}{.44\textwidth}
\captionsetup{width=0.99\textwidth}

\texttt{
\begin{tabular}{l|cccccccccccccccccccccccc}
          & 0 & 1   &  $\cdots$  &  $\cdots$ & e-1   &  e   & e+1   &  $\cdots$ & N     \\ \hline
 0    & 1 & -    & -      & -       & -     &- &   -  &   -&   -     \\
 1    & - & -    & -      & -        & -     &- &   -  &   -&   -     \\
\vdots    & \vdots  & \vdots  & \vdots & \vdots & \vdots & \vdots &\vdots & \vdots & \vdots \\
d-2    & - & -    & -      & -         & -     &- &   -  &   -&   -     \\
    d-1    & -  & $\ast$ & $\ast$ & $\ast$ & $\ast$ &$\ast$ &$\ast$ &$\ast$ &$\ast$ \\
    d    & -  & - & - & - & - & - & $\ast$ &$\ast$ &$\ast$ \\
  d+1    & -  & - & - & - & - & - & $\ast$ &$\ast$ &$\ast$ \\
 \vdots    & \vdots  & \vdots & \vdots & \vdots & \vdots & \vdots &\vdots & \vdots & \vdots \\
    \end{tabular}}
\caption{$\mathbf{N}_{d,e}$ meets only $(I_X)_{\le d-1}=0$ }
\label{fig_No_FormAndNde}
\end{subfigure}
\hspace*{\fill}
\begin{subfigure}{.44\textwidth}
\captionsetup{width=0.99\textwidth}

\texttt{
\begin{tabular}{l|cccccccccccccccccccccccc}
          & 0 & 1   &  $\cdots$    &  $\cdots$ & e-1  &  e   & e+1   &  $\cdots$ & N     \\ \hline
   0    & 1 & -    & -      & -        & -     &- &   -  &   -&   -     \\
   1    & - & -    & -      & -        & -     &- &   -  &   -&   -     \\
\vdots    & \vdots  & \vdots  & \vdots & \vdots & \vdots & \vdots &\vdots & \vdots & \vdots \\
d-2    & - & -    & -      & -         & -     &- &   -  &   -&   -     \\
    d-1    & -  & $\ast$ & $\ast$  & $\ast$ & $\ast$ &$\ast$ & - & - & - \\
    d    & - & -    & -      & -         & -     &- &   -  &   -&   -     \\
    d+1    & - & -    & -      & -         & -     &- &   -  &   -&   -     \\
\vdots    & \vdots  & \vdots & \vdots & \vdots & \vdots & \vdots &\vdots & \vdots & \vdots \\
    \end{tabular}}

\caption{$\mathbf{N}_{d,e}$ meets condition $\mathrm{ND}(d-1)$ }
%\label{fig_NDAndNde}
\end{subfigure}
\caption{condition $\mathrm{ND}(d-1)$ and property $\mathbf{N}_{d,e}$ implies ACM $d$-linear resolution in the category of algebraic sets.}
\label{ND(l)meetsNdp}
\end{figure}

In the final section \ref{sect_question}, we present relevant examples and more consequences of our theory (see e.g. Corollary \ref{ND_hvec}) and raise some questions for further development.

\textbf{Acknowledgement} We are grateful to Frank-Olaf Schreyer for suggesting Example \ref{non_ex_syz_rigidity} and Ciro Ciliberto for reminding us of using `lifting theorems'. The second author also wishes to thank Aldo Conca, David Eisenbud for their questions and comments on the subject and Hailong Dao, Matteo Varbaro for useful discussions on $h$-vectors and condition $\mathrm{ND}(\ell)$.
\bigskip

\section{Condition $\mathrm{ND}(\ell)$ and Syzygies}

\subsection{Condition $\mathrm{ND}(\ell)$ : basic properties and examples}

Throughout this section, we assume that the base field is algebraically closed and $\ch(\kk)=0$ (see Remark \ref{remk_ch_p} for finite characteristics). 

As before, let $X$ be a $n$-dimensional closed subscheme of codimension $e$ in $\mathbb P^{N}$ over $\kk$. Let $I_X$ be $\bigoplus_{m=0}^{\infty} H^0(\mathcal I_{X/\P^N}(m))$, the defining ideal of $X$ in the polynomial ring $R=\kk[x_0, x_1 ,\ldots, x_{N}]$. We mean (co)dimension and degree of $X\subset\P^N$ by the definition deduced from the Hilbert polynomial of $R/I_X$.

Let us begin this study by introducing the definition of condition $\mathrm{ND}(\ell)$ as follows:
\begin{definition}[Condition $\mathrm{ND}(\ell)$]\label{def_NDk}
Let $\kk$ be any algebraically
closed field. We say that a closed subscheme $X\subset\P_\kk^N$ satisfies \textit{condition} $\mathrm{ND}(\ell)$ if
\[H^0(\mathcal{I}_{X\cap \Lambda/\Lambda}(\ell))=0\quad\textrm{for a \textit{general} linear section $\Lambda$ of each dimension $\geq e$}.\]
We sometimes call a subscheme with condition $\mathrm{ND}(\ell)$ a \textit{$\mathrm{ND}(\ell)$-subscheme} as well.
\end{definition}

\begin{remark}\label{basic_properties} We would like to make some remarks on this notion as follows: 
\begin{itemize}
\item [(a)] First of all, if $X\subset\P^N$ satisfies condition $\mathrm{ND}(\ell)$, then every general linear section of $X\cap \Lambda$ also has the condition (i.e. condition $\mathrm{ND}(\ell)$ is preserved under taking general hyperplane sections). Further, from the definition, condition $\mathrm{ND}(\ell)$ on $X$ is completely determined by a general point section of $X$.
\item[(b)] (Basic degree bound) If $X$ is a closed subscheme of codimension $e$ in $\P^{n+e}$ satisfying condition $\mathrm{ND}(\ell)$, then from the sequence $0\to H^{0}(\mathcal{I}_{X\cap\Lambda/\Lambda}(\ell))\to H^{0}(\mathcal{O}_{\Lambda}(\ell))\to H^{0}(\mathcal{O}_{X\cap\Lambda}(\ell))$ it can be easily proved that
$\deg(X)\ge {e+\ell\choose \ell}$.
\item[(c)] A general linear projection of $\mathrm{ND}(\ell)$-subscheme is also an $\mathrm{ND}(\ell)$-subscheme.
%\item A subscheme $X$ is not to be necessarily irreducible, equi-dimensional, or even reduced for condition $\mathrm{ND}(\ell)$.
\item [(d)] Any nondegenerate variety (i.e. irreducible and reduced) satisfies condition $\mathrm{ND}(1)$ due to Bertini-type theorem (see. e.g. \cite[lemma 5.4]{E2}). 
\item [(e)] If a closed subscheme $X\subset \mathbb P^N$ has top dimensional components satisfying $\ND{(\ell)}$, then $X$ also satisfies condition $\ND(\ell)$ whatever $X$ takes as a lower-dimensional component.
\item [(f)] (Maximal $\mathrm{ND}$-index) From the definition, it is easy to see that 
\begin{center}
`X: not satisfying condition $\mathrm{ND}(\ell)$ $\Rightarrow$ X: neither having $\mathrm{ND}(\ell+1)$.'
\end{center} 
Thus, it is natural to regard a notion like
\begin{equation}\label{maxND}
\mathrm{index}_{ND}(X):=\max\{ \ell\in\Z_{\ge0} : \textrm{$X$ satisfies condition $\mathrm{ND}(\ell)$} \}
\end{equation}
which is a new projective invariant of a given subscheme $X\subset\P^N$.
\item[(g)] From the viewpoint (a), one can re-state the definition of condition $\mathrm{ND}(\ell)$ as the \textit{injectivity} of the restriction map $H^{0}(\mathcal{O}_{\Lambda}(\ell))\to H^{0}(\mathcal{O}_{X\cap\Lambda}(\ell))$ for a general point section $X\cap\Lambda$ , while many works in the literature have focused on \textit{surjectivity} (or imposing independent conditions) to study dimensions of linear systems in relatively small degree.
\end{itemize}
\end{remark}

\begin{example}\label{basic_examples} We list some first examples achieving condition $\mathrm{ND}(\ell)$.
\begin{itemize}
\item [(a)] If $X\subset \P^{n+e}$ is an ACM subscheme with $ H^0(\mathcal{I}_{X}(\ell))=0$, then $X$ is an $\mathrm{ND}(\ell)$-subscheme.
\item[(b)] Every linearly normal curve with no quadratic equation is a $\mathrm{ND}(2)$-curve. Further, a variety $X$ is $\ND(2)$ if a general curve section $X\cap\Lambda$ is linearly normal. 
\item[(c)] (From a projection of Veronese embedding) We can also find examples of non-ACM $\ND(\ell)$-variety using projections. For instance, if we consider the case of $v_3(\P^{2})\subset\P^{9}$ and its general projection into $\P^4$ (say $\pi(v_3(\P^{2}))$), then $\deg \pi(v_3(\P^{2}))=9\ge {2+2\choose 2}$ and all the quadrics disappear after this projection. This is a $\ND(2)$-variety by Proposition \ref{codim2_ND_k} (see also Remark \ref{NDk_in_large}).
\end{itemize}
\end{example}

In general, it is not easy to determine whether a given closed subscheme $X$ satisfies condition $\mathrm{ND}(\ell)$ or not. The following proposition tells us a way to verify condition $\mathrm{ND}(\ell)$ by aid of computation the generic initial ideal of $X$ (see e.g. \cite[section 1]{BCR} and references therein for the theory of generic initial ideal and Borel fixed property). 

In what follows, for a homogeneous ideal $I$ in $R$, we denote by $\Gin(I)$ the generic initial ideal of $I$ with respect to the \textit{degree reverse lexicographic order}.

\begin{proposition}[A characterization of condition $\mathrm{ND}(\ell)$]\label{lem:2017-06-15}
 Let $X$ be a closed subscheme of codimension $e$ in $\mathbb P^{n+e}$. Then the followings are equivalent.
 \begin{itemize}
  \item[(a)] $X$ satisfies condition $\mathrm{ND}(\ell)$.
  \item[(b)] $\Gin(I_X) \subset (x_0,\ldots, x_{e-1})^{\ell+1}$.
 \end{itemize} 
\end{proposition}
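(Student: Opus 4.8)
The statement to prove is: $X$ satisfies $\mathrm{ND}(\ell)$ if and only if $\Gin(I_X)\subset(x_0,\ldots,x_{e-1})^{\ell+1}$. The key tool is the behavior of the degree reverse lexicographic order under generic coordinates: taking a general linear section of $X$ of codimension $c$ corresponds to quotienting $R/I_X$ by general linear forms $h_1,\dots,h_c$, and at the level of generic initial ideals this amounts to setting the last $c$ variables $x_{n+e},\dots,x_{n+e-c+1}$ to zero. More precisely, by Green's crystallization / generic hyperplane restriction theorem (in characteristic $0$), $\Gin(I_{X\cap\Lambda}) = \Gin(I_X)|_{x_{n+e}=\cdots=x_{n+e-c+1}=0}$ for a general linear subspace $\Lambda$ of codimension $c$, at least when $c\le n$ so that $X\cap\Lambda$ still has the expected dimension. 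The plan is to reduce everything to a general \emph{point} section $\Gamma = X\cap\Lambda_0$, where $\Lambda_0\cong\P^{e-1+1}=\P^e$... more carefully, $\Lambda$ of dimension $e$ cuts $X$ in a $0$-dimensional scheme $\Gamma$ inside $\P^e$, and condition $\mathrm{ND}(\ell)$ is, by Remark \ref{basic_properties}(a), equivalent to $H^0(\mathcal I_{\Gamma/\P^e}(\ell))=0$, i.e. to $(I_\Gamma)_{\le\ell}=0$ in the polynomial ring $\kk[x_0,\dots,x_e]$.

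Next I would translate the vanishing $(I_\Gamma)_{\le\ell}=0$ into a statement about $\Gin(I_\Gamma)$: since passing to the initial ideal preserves Hilbert function, $(I_\Gamma)_j=0$ is equivalent to $\Gin(I_\Gamma)_j=0$, so $(I_\Gamma)_{\le\ell}=0$ is equivalent to $\Gin(I_\Gamma)\subset(x_0,\dots,x_e)^{\ell+1}$ — wait, we must be careful: $\Gamma$ lives in $\P^e$ with $e+1$ variables, and $\Gin(I_\Gamma)$ is an ideal in $\kk[x_0,\dots,x_e]$ containing no linear forms; saying it has no generators (equivalently no elements) in degree $\le\ell$ is exactly saying $\Gin(I_\Gamma)\subset\mathfrak m^{\ell+1}$ where $\mathfrak m=(x_0,\dots,x_e)$. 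Now I invoke the restriction relation the other direction: $\Gin(I_\Gamma)=\Gin(I_X)|_{x_{e+1}=\cdots=x_{n+e}=0}$, and because $\Gin(I_X)$ is a Borel-fixed (strongly stable) monomial ideal, setting the last $n$ variables to zero simply deletes the monomials involving those variables. The crucial combinatorial point — which I'd isolate as a lemma — is that for a Borel-fixed ideal $J\subset\kk[x_0,\dots,x_{n+e}]$, one has $J\subset(x_0,\dots,x_{e-1})^{\ell+1}$ if and only if $J|_{x_{e+1}=\cdots=x_{n+e}=0}\subset(x_0,\dots,x_{e-1})^{\ell+1}$ (inside $\kk[x_0,\dots,x_e]$), which in turn holds iff that restriction lies in $(x_0,\dots,x_e)^{\ell+1}$; the point is that a Borel monomial not lying in $(x_0,\dots,x_{e-1})^{\ell+1}$ uses at most $\ell$ of the variables $x_0,\dots,x_{e-1}$ (counted with multiplicity) and the rest are $\ge x_e$, and by strong stability one can push those higher variables down to $x_e$, producing a monomial that survives the restriction and still fails to be in $(x_0,\dots,x_{e-1})^{\ell+1}$.

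So the chain is: $\mathrm{ND}(\ell) \iff (I_\Gamma)_{\le\ell}=0 \iff \Gin(I_\Gamma)\subset(x_0,\dots,x_e)^{\ell+1} \iff \Gin(I_\Gamma)\subset(x_0,\dots,x_{e-1})^{\ell+1}$ (the last equivalence because any Borel ideal in $e+1$ variables with no element of degree $\le\ell$ automatically avoids $(x_0,\dots,x_{e-1})^{\ell+1}$'s complement — actually one direction is trivial and the other is the strong-stability argument just sketched) $\iff \Gin(I_X)\subset(x_0,\dots,x_{e-1})^{\ell+1}$ (by the restriction lemma for Borel ideals together with Green's theorem identifying $\Gin(I_\Gamma)$ with the restriction of $\Gin(I_X)$). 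I would present this as: Step 1, reduce to point sections via Remark \ref{basic_properties}(a); Step 2, record Green's generic restriction theorem to get $\Gin(I_\Gamma) = \Gin(I_X)$ restricted to the first $e+1$ variables; Step 3, the Borel-combinatorics lemma relating containment in $(x_0,\dots,x_{e-1})^{\ell+1}$ before and after restriction; Step 4, assemble.

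\textbf{Main obstacle.} The delicate point is Step 3, the combinatorial lemma about Borel-fixed ideals and the power $(x_0,\dots,x_{e-1})^{\ell+1}$ versus $(x_0,\dots,x_{n+e})^{\ell+1}$ — in particular making sure the equivalence is genuinely an "iff" and not just one implication, and handling the interplay between "the last $n$ variables are killed by the section" and "the first $e$ variables define the power ideal." One must verify that a strongly stable monomial ideal in $\kk[x_0,\dots,x_{n+e}]$ is contained in $(x_0,\dots,x_{e-1})^{\ell+1}$ exactly when every minimal generator is, and that the restriction map interacts correctly; the characteristic-$0$ hypothesis enters precisely to guarantee $\Gin$ is Borel-fixed and that Green's hyperplane-restriction statement holds cleanly. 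A secondary subtlety is checking that the general linear section used in the definition of $\mathrm{ND}(\ell)$ (dimension of $\Lambda$ any value $\ge e$) can be taken to be the one produced by generic coordinate restriction, i.e. that genericity of the section is compatible with genericity of the coordinate change — this is standard but should be stated.
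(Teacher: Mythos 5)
Your architecture is the same as the paper's: reduce to a general zero\-/dimensional section $\Gamma=X\cap\Lambda$ with $\dim\Lambda=e$, relate $\Gin(I_{\Gamma/\Lambda})$ to $\Gin(I_X)$ via generic restriction, and use Borel\-/fixedness to push the ``tail'' of a monomial down to a power of $x_e$. But there is a concrete gap in your Step 2, and it is not where you located the main obstacle. The identity you invoke, $\Gin(I_{\Gamma/\Lambda})=\Gin(I_X)|_{x_{e+1}=\cdots=x_{n+e}=0}$, is false in general: restricting $I_X$ modulo $n$ general linear forms produces an ideal that need not be saturated, while $I_{\Gamma/\Lambda}=\bigoplus_m H^0(\mathcal I_{\Gamma/\Lambda}(m))$ (the ideal to which condition $\mathrm{ND}(\ell)$ refers) is saturated. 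The correct statement, which is exactly the tool the paper uses (\cite[theorem~2.1]{AH}), is
\[\Gin(I_{\Gamma/\Lambda})=\Bigl[\bigl(\Gin(I_X),x_{e+1},\ldots,x_{n+e}\bigr)/(x_{e+1},\ldots,x_{n+e})\Bigr]^{\sat},\]
and for a Borel\-/fixed ideal the saturation is computed by setting $x_e\to 1$. Omitting this breaks two links in your chain. First, your claimed equivalence ``$J'\subset(x_0,\ldots,x_{e-1})^{\ell+1}$ iff $J'\subset(x_0,\ldots,x_e)^{\ell+1}$'' fails for the unsaturated restriction $J'$: take $e=1$, $\ell=1$, $J'=(x_0^2,x_0x_1)\subset\kk[x_0,x_1]$, which is Borel\-/fixed, has no elements of degree $\le 1$, yet is not contained in $(x_0)^2$; its saturation $(x_0)$ does contain a linear form, which is precisely the geometric phenomenon (a section lying on a degree-$\ell$ hypersurface that does not lift) that $\mathrm{ND}(\ell)$ is designed to detect. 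Second, and more importantly, the implication $(a)\Rightarrow(b)$ genuinely needs the saturation: from $T=NM\in\Gin(I_X)$ with $N$ supported on $x_0,\ldots,x_{e-1}$, Borel\-/fixedness gives $Nx_e^{\deg M}\in\Gin(I_X)$, which survives the restriction as a monomial of degree $\deg T$ — too large to contradict anything. It is only after the saturation $x_e\to 1$ that you land $N$ itself in $\Gin(I_{\Gamma/\Lambda})$, where $\mathrm{ND}(\ell)$ forces $\deg N\ge\ell+1$ and hence $T\in(x_0,\ldots,x_{e-1})^{\ell+1}$.

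The gap is repairable, and the repair is the paper's proof: insert the saturation into your Step 2, note that containment in $(x_0,\ldots,x_{e-1})^{\ell+1}$ is insensitive to saturating with respect to $x_e$ (since that ideal is generated by monomials in $x_0,\ldots,x_{e-1}$ only), and use that $\Gin$ of a saturated ideal is saturated so that $\Gin(I_{\Gamma/\Lambda})$ has no minimal generator divisible by $x_e$ — which is what makes ``no elements in degree $\le\ell$'' equivalent to ``contained in $(x_0,\ldots,x_{e-1})^{\ell+1}$'' for that ideal. Your secondary worries (compatibility of generic sections with generic coordinates, and whether containment can be checked on minimal generators) are indeed standard and not the issue.
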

\begin{proof}
Let $\Lambda$ be a general linear space of dimension $e$ and let $\Gamma$ be the zero-dimensional intersection of $X$ with $\Lambda$.\\
$(a)\Rightarrow(b)$ : For a monomial $T \in \Gin(I_X)$, decompose $T$ as a product of two monomials $N$ and $M$ such that
$$\Supp(N)\subset \{x_0,\cdots, x_{e-1}\} \quad \text{ and}\quad  \Supp(M)\subset \{x_e,\cdots, x_{n+e}\}.$$
By the Borel fixed property, we see that $Nx_e^{\deg(M)} \in \Gin(I_X)$.  Then, it follows from \cite[theorem~2.1]{AH} that
\[\Gin(I_{\Gamma/\Lambda})=\left[\frac{(\Gin(I_X), x_{e+1},\ldots, x_{n+e})}{(x_{e+1},\ldots, x_{n+e})}\right]^{\sat}=\left[\frac{(\Gin(I_X), x_{e+1},\ldots, x_{n+e})}{(x_{e+1},\ldots, x_{n+e})}\right]_{x_{e}\to 1},\]
which implies $N \in \Gin(I_{\Gamma/\Lambda})$. By the assumption that $X$ satisfies $\mathrm{ND}(\ell)$, we see that $\deg(N)\geq \ell+1$, and thus $N\in (x_0,\ldots, x_{e-1})^{\ell+1}$. Therefore $T=NM \in  (x_0,\ldots, x_{e-1})^{\ell+1}$
as we wished.\\
$(a)\Leftarrow(b)$: Conversely, assume that $\Gin(I_X) \subset (x_0,\ldots, x_{e-1})^{\ell+1}$.  Then,
\[\Gin(I_{\Gamma/\Lambda})=\left[\frac{(\Gin(I_X), x_{e+1},\ldots, x_{n+e})}{(x_{e+1},\ldots, x_{n+e})}\right]^{\sat}\subset \left[\frac{((x_0,\ldots, x_{e-1})^{\ell+1}, x_{e+1},\ldots, x_{n+e})}{(x_{e+1},\ldots, x_{n+e})}\right]_{x_{e}\to 1}.\]
Note that the rightmost ideal is identified with the ideal $(x_0,\ldots, x_{e-1})^{\ell+1}$ in the polynomial ring $\kk[x_0,\ldots, x_{e}]$. Therefore $(I_{\Gamma/\Lambda})_{\ell}=0$ and thus $X$ satisfies condition $\mathrm{ND}(\ell)$.
\end{proof}

Beyond the first examples in Examples \ref{basic_examples}, one can raise a question as `Is there a higher-dimensional $\ND(\ell)$-variety $X$ which is linearly normal (i.e. not coming from isomorphic projections) but also non-ACM?'. We can construct such an example as a toric variety which is $3$-dimensional and has depth 3 as follows.

\begin{example}[A linearly normal and non-ACM $\ND(3)$-variety]\label{ex:20150626}
Consider a matrix
$$
A=\left[\begin{array}{rrrrrrrrrrr}
3&-5& 4 & 0 & 0 & 0\\
1& 0& 0 & 1 & 0 & 0\\
0& 1& 0 & 0 & 1 & 0\\
0& 0& 1 & 0 & 0 & 1\\
\end{array}
\right]
$$
and we consider the toric ideal induced by the matrix $A$. Using \texttt{Macaulay 2} \cite{M2}, we compute the defining ideal as
$$
\begin{array}{llllllllllll}
I_A&=& (x_1x_2^2x_3-x_0x_4x_5^2,\,\,  x_2x_3^3x_4-x_0^3x_1x_5,\,\,   x_0^2x_1^2x_2-x_3^2x_4^2x_5,\\
     & &    x_2^3x_3^4-x_0^4x_5^3,\,\,   x_0x_1^3x_2^3-x_3x_4^3x_5^3,\,\,   x_0^5x_1^3-x_3^5x_4^3,\,\,
      x_1^4x_2^5-x_4^4x_5^5).
\end{array}
$$
Then the generic initial ideal of $I_A$ with respect to degree reverse lexicographic order is
$$\mathrm{Gin}(I_A)=(x_0^4,\,\, x_0^3x_1^2,\,\, x_0^2x_1^3,\,\, x_0x_1^5,\,\, x_1^6,\,\, x_0x_1^4x_2^2,\,\, x_1^5x_2^2,\,\, x_0^3x_1x_2^4,\,\, x_0^2x_1^2x_2^5)~.$$
Hence, $I_A$ defines a $3$-dimensional toric variety $X\subset \mathbb P^5$ with $\mathrm{depth}(X)=3$, which satisfies condition $\mathrm{ND}(\ell)$ for $\ell\le3$ by Proposition \ref{lem:2017-06-15}. Note that $I_A$ is linearly normal but not ACM.
\end{example}

Finally, we would like to remark that condition $\mathrm{ND}(\ell)$ is expected to be generally satisfied in the following manner.

\begin{remark}[$\mathrm{ND}(\ell)$ in a relatively large degree]\label{NDk_in_large}
 For a given codimension $e$, fixed $\ell$, and any \ti{general} closed subscheme $X$ in $\P^{n+e}$, it is expected that 
\begin{align}\label{ND_k_expect}
&X\rightarrow \mathrm{ND}(\ell)\quad\textrm{as \quad $\deg(X)\rightarrow\infty$}
\end{align}
under the condition $H^0(\mathcal{I}_{X/\P^{n+e}}(\ell))=0$ and  exceptional cases do appear with some special geometric properties (e.g. such as projected Veronese surface), because the failure of $\mathrm{ND}(\ell)$ means that any general point section $X\cap\Lambda$ sits in a hypersurface of degree $\ell$, which is not likely to happen for a sufficiently large $\deg(X)$. For instance, the `expectation' (\ref{ND_k_expect}) can have an explicit form in case of codimension two in the following proposition (see Section \ref{sect_question} for further discussion).
\end{remark}

\begin{proposition}[$\mathrm{ND}(\ell)$ in codimension two]\label{codim2_ND_k}
Let  $X\subset\P^N$ be any nondegenerate integral variety of codimension two over an algebraically closed field $\kk$ with $\ch(\kk)=0$. Say $d=\deg(X)$. Suppose that $H^0(\mathcal{I}_{X/\P^N}(\ell))=0$ for some $\ell\ge2$. Then, any such $X$ satisfies condition $\mathrm{ND}(\ell)$ if $d>\ell^2+1$.
\end{proposition}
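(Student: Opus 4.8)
The plan is to reduce the statement to a one-variable argument about a general curve section $C = X \cap \Lambda$ (with $\Lambda$ general of dimension three), and from there to a statement about a general point section $\Gamma = C \cap H$, which by Remark \ref{basic_properties}(a)--(b) is all that condition $\mathrm{ND}(\ell)$ sees. First I would invoke the hyperplane-section behavior of $\mathrm{ND}(\ell)$ to pass to the general surface section, and then to the general curve section $C \subset \P^3$, an integral nondegenerate curve of degree $d$ with $H^0(\mathcal{I}_C(\ell)) = 0$ (the vanishing of quadrics, hence of degree-$\ell$ forms, is inherited by general hyperplane sections for an integral nondegenerate variety, using $\ell \ge 2$ and the standard cohomology exact sequence, provided $X$ is not a quadric-containing exception, which is excluded by hypothesis). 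So it suffices to show: a general hyperplane section $\Gamma$ of such a curve $C$, a set of $d$ points in $\P^2$, imposes independent conditions on forms of degree $\ell$, i.e. $h^0(\mathcal{I}_{\Gamma/\P^2}(\ell)) = \binom{\ell+2}{2} - d$ when $d \le \binom{\ell+2}{2}$, and more to the point $(I_\Gamma)_\ell = 0$ when $d > \binom{\ell+2}{2}$ — but note $d > \ell^2 + 1 \ge \binom{\ell+2}{2}$ fails for small $\ell$, so the real content is that $\Gamma$ is \emph{not} contained in a curve of degree $\ell$ in $\P^2$.

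The heart of the matter is a General Position / Uniform Position argument together with a Castelnuovo-type bound. A general hyperplane section $\Gamma$ of an integral nondegenerate space curve lies in general position (Harris's uniform position principle, valid in characteristic zero). If $\Gamma$ were contained in a plane curve $D$ of degree $\ell$, then since $|\Gamma| = d > \ell^2 + 1 > \ell^2 = \deg(D) \cdot \ell \ge$ (number of points $D$ shares with a general line through the standard Bézout / Bezout-bound reasoning) one derives a contradiction with the fact that $\Gamma$ imposes the expected number of conditions. More precisely, I would run the classical argument: the points of $\Gamma$ are in uniform position, so if $\ell + 2$ of them were collinear the whole of $\Gamma$ would be, contradicting nondegeneracy; iterating, $\Gamma$ imposes independent conditions on forms of each degree $m$ up to the point where $\binom{m+2}{2}$ overtakes the growth dictated by the general-position hypothesis, and the precise numerical threshold $d > \ell^2 + 1$ is exactly what Castelnuovo's lemma yields to guarantee $h^0(\mathcal{I}_{\Gamma/\P^2}(\ell)) = 0$. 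Equivalently one can phrase this via $\Gin$: by Proposition \ref{lem:2017-06-15} it is enough to show $\Gin(I_{\Gamma/\Lambda})$ contains no generator of degree $\le \ell$, and the generic initial ideal of a set of points in general position in $\P^2$ is lex-segment-like, so its smallest generator has degree $> \ell$ precisely when $d$ exceeds $\binom{\ell+1}{2}$-type bound; the sharper $\ell^2 + 1$ comes from exploiting that $\Gamma$ is a hyperplane section of an irreducible curve, not an arbitrary set of points.

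I expect the main obstacle to be making the Castelnuovo/uniform-position estimate yield precisely the bound $d > \ell^2 + 1$ rather than a weaker quadratic bound like $d > \binom{\ell+1}{2}$ or $d \ge \ell^2$. The clean inequality $d > \ell^2+1$ suggests an argument bounding, for a hypothetical degree-$\ell$ plane curve $D \supseteq \Gamma$, the number of points of $\Gamma$ on $D$ using that a general hyperplane section meets $D$ in a controlled way — if $D$ is irreducible, Bézout against the (closure of) hypersurfaces cutting $C$ gives $|\Gamma| \le \ell \cdot d'$ for appropriate $d'$; if $D$ is reducible, a component-counting argument splits $\Gamma$ among the components, and the extremal configuration (a line plus a degree-$(\ell-1)$ curve, etc.) is what produces the "$+1$". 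Handling the reducible case of $D$ and confirming the extremal count is exactly $\ell^2 + 1$ will be the delicate bookkeeping; everything else (passage to hyperplane sections, uniform position in characteristic zero, the translation to $\Gin$) is routine given the tools already set up in the paper.
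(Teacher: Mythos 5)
There is a genuine gap, and it sits exactly where you wave the step through as ``routine'': the claim that $H^0(\mathcal{I}_{X/\P^N}(\ell))=0$ is inherited by a general hyperplane section. The restriction sequence $0\to\mathcal{I}_X(\ell-1)\to\mathcal{I}_X(\ell)\to\mathcal{I}_{X\cap H/H}(\ell)\to 0$ only embeds $H^0(\mathcal{I}_{X\cap H/H}(\ell))$ into $H^1(\mathcal{I}_X(\ell-1))$ once $H^0(\mathcal{I}_X(\ell))=0$, so vanishing downstairs would need a projective normality input you do not have. Worse, the implication is simply false without a degree hypothesis: the isomorphically projected Veronese surface $S\subset\P^4$ from the paper's introduction is an integral, nondegenerate, codimension-two variety with $H^0(\mathcal{I}_S(2))=0$ whose general hyperplane section lies on a quadric. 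Ruling out exactly this kind of failure is the entire content of the proposition, so it cannot be assumed at the outset. The paper's proof runs the logic in the opposite direction: it invokes the lifting theorem for codimension-two subvarieties (Chiantini--Ciliberto, Bonacini), which says that if a general hyperplane section of an integral codimension-two $X$ of degree $d>\ell^2+1$ lies on a hypersurface of degree $\ell\ge2$, then so does $X$. Assuming $\mathrm{ND}(\ell)$ fails, one lifts a degree-$\ell$ hypersurface from a general linear section step by step all the way up to $X$, contradicting $(I_X)_\ell=0$. The threshold $\ell^2+1$ is the bound in that theorem, not something produced by Castelnuovo-type counting on the point section.

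Your proposed endgame also cannot be repaired at the level of the general point section $\Gamma\subset\P^2$. Uniform position together with $|\Gamma|=d>\ell^2+1$ does not preclude $\Gamma$ from lying on a plane curve of degree $\ell$: a general plane section of any integral space curve contained in a surface of degree $\ell$ (for instance a high-degree curve on a quadric) is a uniform-position set of arbitrarily large cardinality lying on a degree-$\ell$ plane curve. So no bookkeeping over reducible $D$ will close the argument from general position alone; the hypothesis $(I_X)_\ell=0$ has to be brought down to the section through a lifting statement, and that statement is a substantial theorem from the lifting-problem literature rather than a routine consequence of the tools already set up in the paper.
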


\begin{proof}[Proof of Proposition \ref{codim2_ND_k}]
For the proof, we would like to recall a result for the `lifting problem' (for the literature, see e.g. \cite{CC, Bo15} and references therein) as follows:\\

\textit{
``Let  $X\subset\P^N$ be any nondegenerate reduced irreducible scheme of codimension two over an algebraically closed field $\kk$ with $\ch(\kk)=0$ and let $X_H$ be the general hyperplane section of $X$. Suppose that $X_H$ is contained in a hypersurface of degree $\ell$ in $\P^{N-1}$ for some $\ell\ge2$. If  $d>\ell^2+1$, then $X$ is contained in a hypersurface of degree $\ell$ in $\P^{N}$.''
}\\

Say $n=\dim(X)$ and suppose that $X\subset\P^N$ does not satisfy $\mathrm{ND}(\ell)$. Then for some $r$ with $2\le r\le n+1$, the $(r-2)$-dimensional general linear section of $X$, $X\cap\Lambda^r$ lies on a hypersurface of degree $\ell$ in $\Lambda^{r}$ (i.e. $H^0(\mathcal{I}_{X\cap \Lambda^r/\Lambda^r}(\ell))\neq0$). By  above lifting theorem, this implies $H^0(\mathcal{I}_{X\cap \Lambda^{r+1}/\Lambda^{r+1}}(\ell))\neq0$ for the $(r-1)$-dimensional general linear section $X\cap \Lambda^{r+1}$. By repeating the argument, we obtain that $H^0(\mathcal{I}_{X/\P^N}(\ell))\neq0$, which is a contradiction.
\end{proof}

\begin{example}[General curves in $\P^3$]\label{sp_curve_ND_k} Suppose that $C\subset \mathbb P^3$ be a general curve of degree $d\geq g+3$ with non-special line bundle $\mathcal O_C(1)$, where $g$ is the genus of $C$. When $g\geq 3$, then by the maximal rank theorem due to Ballico-Ellia \cite{BE}, the natural restriction map
$$ H^0(\mathcal O_{\mathbb P^3}(2)) \to H^0(\mathcal O_C(2))$$
is injective. So there is no quadric containing $C$. Further, from Proposition~\ref{codim2_ND_k} we see that a general point section $C\cap H$ also has no quadric. Thus $C$ satisfies condition $\mathrm{ND}(2)$. In a similar manner, we can show that if $g\geq 8$ then such curve satisfies $\mathrm{ND}(3)$ and in general it has condition $\mathrm{ND}(\ell)$ in case of $d\ge\max\{g+3,\ell^2+2\}$. 
\end{example}

\subsection{Sharp upper bounds on Betti numbers of the first non-trivial strand}

From now on, we proceed to prove Theorem~\ref{thm A}, which is one of our main results.\\

\noindent {\bf Theorem~1.1 (a) }\label{pro1:2017-06-15}
Let $X$ be any closed subscheme of codimension $e$ in $\P^{n+e}$ satisfying condition $\mathrm{ND}(\ell)$ for some $\ell\ge1$ and let $I_{X}$ be the (saturated) defining ideal of $X$. Then we have
\begin{equation}\label{eq3:2017-06-15}
\beta_{i,\ell}(X)\leq {i+\ell-1\choose \ell}{e+\ell\choose i+\ell} \quad \text{ for all } i\ge 1.
\end{equation}

\begin{proof}[A proof of Theorem~1.1~(a) :]
First, recall that by \cite[corollary~1.21]{G} we have
\begin{equation}\label{EQ1: 2017-06-15}
\beta_{i,j}(X)\leq \beta_{i,j}(R/\Gin(I_X)) \quad \text{ for all }i, j\geq 0.
\end{equation}
By the assumption that $X$ satisfies condition $\mathrm{ND}(\ell)$ for a given $\ell>0$, we see that $\mathrm{Gin}(I_X)_d=0$ for $d\leq \ell$. Moreover, by Lemma~\ref{lem:2017-06-15}, we have
\begin{equation}\label{eq2:2017-06-15}
\Gin(I_X)\subset (x_0,\ldots, x_{e-1})^{\ell+1}.
\end{equation}

For a monomial ideal $I$, we write $\mathcal G(I)$ for the set of minimal monomial generators and $\mathcal G(I)_{j+1}$ for the subset of degree $j+1$ part. We denote $\max\{a: k_a>0\}$ for a given monomial $T=x_0^{k_0}\cdots x_n^{k_n}$ by $\max(T)$. Then, for any Borel fixed ideal $J\subset R$ we have a formula as
\begin{equation}\label{EK_betti_formula}
\beta_{i,j}(R/J)=\sum_{T\in{\mathcal G (J)}_{j+1}}{\max(T) \choose i-1} \quad \textrm{for every } i ,~j~
\end{equation}
from the result of Eliahou-Kervaire (see e.g. \cite[theorem~2.3]{AH}).

(i) Let $0\leq i\leq e$. Consider the ideal $J_0=(x_0,\cdots,x_{e-1})^{\ell+1}$ which is Borel-fixed. We see that $J_0$ is generated by the maximal minors of $(\ell+1)\times (\ell+e)$ matrix
$$
\left(
\begin{array}{cccccccccccccccc}
 x_0 & x_1 & \cdots & x_{e-1}    & 0     & \cdots & 0 & 0\\
 0    & x_0 &  x_1    & \cdots & x_{e-1} & 0        & \cdots & 0\\
 & &      & \cdots & &         &  & \\
 0    & \cdots & 0    & x_0 & x_1 & x_2        & \cdots & x_{e-1}\\
\end{array}
\right)\quad.
$$
So, the graded Betti numbers of $R/J_0$ are those given by the Eagon-Northcott resolution of the maximal minors of a generic matrix of size $(\ell+1)\times (\ell+e)$ (see \cite[remark~2.11]{GHM}). This implies that
\begin{equation}\label{eq4:2017-06-15}
\beta_{i, \ell}(R/J_0)= {i+\ell-1\choose \ell}{e+\ell\choose i+\ell}.
\end{equation}
By relation (\ref{eq2:2017-06-15}), we see $\mathcal{G}(\Gin(I_X))_{\ell+1}\subset \mathcal{G}(J_0)_{\ell+1}$. So,  above formula (\ref{EK_betti_formula}) implies $\beta_{i,\ell}(R/\Gin(I_X)) \leq \beta_{i, \ell}(R/J_0)$.
Consequently, for each $0\leq i\leq e$ we conclude that
$$\beta_{i,\ell}(X)\leq \beta_{i,\ell}(R/\Gin(I_X)) \leq \beta_{i, \ell}(R/J_0)= {i+\ell-1\choose \ell}{e+\ell\choose i+\ell}~, $$
as we wished.\\

(ii) Let $e< i$. By \eqref{eq2:2017-06-15}, we see that if $T\in \mathrm{Gin}(I_X)_{\ell+1}$ then $\max(T)\leq e-1$. Then, from (\ref{EK_betti_formula}) it follows
$$\beta_{i,\ell}(R/\Gin(I_X))=\sum_{T\in{\mathcal G (\Gin(I_X))}_{\ell+1}}{\max(T) \choose i-1}=0 \quad\quad \text{ for all } i>e.$$
Hence, we get $\beta_{i,\ell}(X)=0$ by \eqref{EQ1: 2017-06-15}.
\end{proof}

\noindent{\bf Theorem~1.1 (b) }\label{main_thm}
Let $X$ be any closed subscheme of codimension $e$ in $\P^{n+e}$ satisfying condition $\mathrm{ND}(\ell)$ for some $\ell\ge1$ and let $I_{X}$ be the (saturated) defining ideal of $X$. Then, the followings are all equivalent.
\begin{itemize}
\item[(i)] $\displaystyle\beta_{i,\ell}(X)={i+\ell-1\choose \ell}{e+\ell\choose i+\ell}$ for all $1\le i \le e$;\\[1ex]
\item[(ii)] $\displaystyle\beta_{i,\ell}(X)={i+\ell-1\choose \ell}{e+\ell\choose i+\ell}$ for some $1\le i\le e$;\\[1ex]
\item[(iii)] $\mathrm{Gin}(I_X)=(x_0,x_1,\cdots,x_{e-1})^{\ell+1}$.\\[1ex]
\item[(iv)] $X$ is an ACM variety with $(\ell+1)$-linear resolution;
\end{itemize}
In this case, $X$ has minimal degree, i.e. $\deg(X)={e+\ell\choose \ell}$.

\begin{proof}[A proof of Theorem~1.1~(b) :]
(i) $\Rightarrow$ (ii) : This is trivial.

(ii) $\Rightarrow$ (iii) : Suppose that there exists an index $i$ such that $1\le i\le e$ and $\beta_{i,\ell}(X)={i+\ell-1\choose \ell}{e+\ell\choose i+\ell}$. Recall that $J_0=(x_0,\cdots,x_{e-1})^{\ell+1}$ has the Borel fixed property.  By (\ref{EK_betti_formula}), we have
\begin{align*}
\beta_{i,\ell}(R/J_0)&=\sum_{T\in{\mathcal G(J_0)}_{\ell+1}}{\max(T) \choose i-1}=\sum_{j=i-1}^{e-1}\binom{j}{i-1}\bigg| \{ T\in \mathcal G(J_0)_{\ell+1}\mid \max(T)=j\}\bigg| \\
                        &=\sum_{j=i-1}^{e-1}\binom{j}{i-1}\dim_{\kk} x_j\cdot \kk[x_0, \cdots, x_j]_{\ell}=\sum_{j=i-1}^{e-1}\binom{j}{i-1}\binom{j+\ell}{\ell}.
\end{align*}
Hence we see from \eqref{eq4:2017-06-15} that the following binomial identity holds:
\begin{equation}\label{bino_id_from_betti}
{i+\ell-1\choose \ell}{e+\ell\choose i+\ell}=\sum_{j=i-1}^{e-1}\binom{j}{i-1}\binom{j+\ell}{\ell}~.
\end{equation}
By the assumption that $\beta_{i,\ell}(X)={i+\ell-1\choose \ell}{e+\ell\choose i+\ell}=\beta_{i,\ell}(R/J_0)$ and the binomial identity (\ref{bino_id_from_betti}), we have
\begin{align*}
\beta_{i,\ell}(R/\Gin(I_X))&=\sum_{T\in{\mathcal G(\Gin(I_X))}_{\ell+1}}{\max(T) \choose i-1}=\sum_{j=i-1}^{e-1}\binom{j}{i-1}\bigg| \{ T\in \mathcal G(\Gin(I_X))_{\ell+1}\mid \max(T)=j\}\bigg| \\
                                    &\leq\sum_{j=i-1}^{e-1}\binom{j}{i-1}\dim_{\kk} x_j\cdot \kk[x_0, \cdots, x_j]_{\ell}=\sum_{j=i-1}^{e-1}\binom{j}{i-1}\binom{j+\ell}{\ell}=\beta_{i,\ell}(R/I_X).
\end{align*}
Thus, by the cancellation principle \eqref{EQ1: 2017-06-15}, we conclude that $\beta_{i,\ell}(R/\Gin(I_X))=\beta_{i,\ell}(R/I_X)$. This implies that, for each $j$ with $i-1\le j\le e-1$,
$$ \{ T\in \mathcal G(\Gin(I_X))_{\ell+1}\mid \max(T)=j\}= x_j\cdot \kk[x_0, \cdots, x_j]_{\ell}~.$$

In particular, when $j=e-1$, we obtain that $x_{e-1}^{\ell+1}\in \Gin(I_X)$ and it follows from Borel fixed property that
$$\mathrm{Gin}(I_X)_{\ell+1}=(J_0)_{\ell+1}.$$
Now, since $X$ satisfies condition ${\rm ND}(\ell)$, by Lemma \ref{lem:2017-06-15} we have that $\mathrm{Gin}(I_X)\subset J_0$. Because $J_0$ is generated in degree $\ell+1$, this implies that $\mathrm{Gin}(I_X)= J_0$.

(iii) $\Rightarrow$ (iv) Note that if $\mathrm{Gin}(I_X)=(x_0,\cdots,x_{e-1})^{\ell+1}$, then $R/\Gin(I_X)$ has $\ell$-linear resolution. By Cancellation principle \cite[corollary~1.12]{G}, the minimal free resolution of $I_X$ is obtained from that of $\Gin(I_X)$ by canceling some adjacent terms of the same shift in the free resolution. This implies that the betti table of $R/I_X$ are the same as that of $R/\Gin(I_X)$, because $R/\Gin(I_X)$ has $\ell$-linear resolution. This means $R/I_X$ is arithmetically Cohen-Macaulay with $\ell$-linear resolution.

(iv) $\Rightarrow$ (i) This follows directly from \cite[proposition~1.7]{EG}.
\end{proof}

\begin{remark}\label{thm A_recover_HK}
For the case of $\ell=1$, Theorem~\ref{thm A} was proved in \cite{HK2} for any nondegenerate variety $X$ over any algebraically closed field (recall that every nondegenerate variety satisfies $\ND(1)$). Thus, this theorem is a generalization of the previous result to cases of $\ell\ge2$.

Further, we would also like to remark that for $\ell=1$ a given $X$ satisfies all the consequences of Theorem~\ref{thm A} (b) once the degree inequality $\deg(X)\ge {e+\ell\choose \ell}$ attains equality (i.e. the case of classical minimal degree), since they are all 2-regular and arithmetically Cohen-Macaulay. But, for higher $\ell\ge2$, this is no more true (see Example \ref{example_l-th_minimal}). If one does hope to establish a `converse' in Theorem~\ref{thm A} (b), then it is necessary to impose some additional conditions on components of those $\mathrm{ND}(\ell)$-schemes of `minimal degree of $\ell$-th kind' (i.e. $\deg(X)={e+\ell\choose \ell}$).
\end{remark}

As a consequence of Theorem \ref{thm A}, using the upper bound for $\beta_{i,\ell}(X)$ we can obtain a generalization of  a part of Green's $K_{p,1}$-theorem on the linear strand by quadrics of nondegenerate varieties in \cite{G2} to case of the first non-trivial linear strand by higher degree equations of any $\mathrm{ND}(\ell)$-schemes as follows.

\begin{corollary}[$K_{p,\ell}$-theorem for $\mathrm{ND}(\ell)$-subscheme] Let $X$ be any closed subscheme of codimension $e$ in $\P^{n+e}$ satisfying condition $\mathrm{ND}(\ell)$. Then, $\beta_{i,j}(X)=0$ for each $i > e,~j\le\ell$.
\end{corollary}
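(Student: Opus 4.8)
The plan is to derive this $K_{p,\ell}$-type vanishing directly from the generic initial ideal characterization already established, exactly as in the proof of Theorem~1.1~(a) part (ii). The key input is the chain of comparisons $\beta_{i,j}(X)\le\beta_{i,j}(R/\Gin(I_X))$ for all $i,j$ (Green's comparison theorem, already cited as \cite[corollary~1.21]{G}), together with Lemma~\ref{lem:2017-06-15}, which tells us that condition $\mathrm{ND}(\ell)$ is equivalent to $\Gin(I_X)\subset(x_0,\ldots,x_{e-1})^{\ell+1}$. So it suffices to prove the vanishing for the Borel-fixed ideal $\Gin(I_X)$, and for that one uses the Eliahou--Kervaire formula \eqref{EK_betti_formula}.

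First I would record that condition $\mathrm{ND}(\ell)$ forces $\Gin(I_X)_d=0$ for $d\le\ell$ and $\Gin(I_X)\subset(x_0,\ldots,x_{e-1})^{\ell+1}$, so in particular every minimal generator $T$ of $\Gin(I_X)$ has degree $\ge\ell+1$ and, among those of degree exactly $\ell+1$, satisfies $\Supp(T)\subset\{x_0,\ldots,x_{e-1}\}$, hence $\max(T)\le e-1$. Next, for $j\le\ell$ and any $i$, the Eliahou--Kervaire formula gives $\beta_{i,j}(R/\Gin(I_X))=\sum_{T\in\mathcal G(\Gin(I_X))_{j+1}}\binom{\max(T)}{i-1}$; for $j<\ell$ the index set is empty since $\Gin(I_X)$ has no generators in degree $\le\ell$, and for $j=\ell$ each term has $\max(T)\le e-1$, so $\binom{\max(T)}{i-1}=0$ whenever $i-1>e-1$, i.e.\ $i>e$. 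Therefore $\beta_{i,j}(R/\Gin(I_X))=0$ for $i>e$ and $j\le\ell$, and by \eqref{EQ1: 2017-06-15} the same vanishing holds for $\beta_{i,j}(X)$, as claimed. (One should note the degenerate case: a minimal generator of $\Gin(I_X)$ in degree $\ell+1$ could a priori be a pure power $x_k^{\ell+1}$ with $k\le e-1$, but then $\max(T)=k\le e-1$ still, so the argument is unaffected; and if $\Gin(I_X)$ has no generators of degree $\ell+1$ at all, the sum is empty and the vanishing is immediate.)

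There is essentially no obstacle here: the statement is a formal consequence of ingredients already in hand, and the proof is a two-line specialization of the argument in Theorem~1.1~(a)(ii). The only point requiring a modicum of care is bookkeeping the degree range $j\le\ell$ rather than just $j=\ell$ — one must separately observe that $\Gin(I_X)$ is zero in degrees $\le\ell$ so there are no relevant generators in degrees $j+1\le\ell$ — but this is immediate from $\Gin(I_X)\subset(x_0,\ldots,x_{e-1})^{\ell+1}$. It is worth remarking that this recovers the relevant half of Green's $K_{p,1}$-theorem when $\ell=1$: it says a $\mathrm{ND}(1)$-scheme (in particular any nondegenerate variety) of codimension $e$ has no linear syzygies of quadrics past homological degree $e$.
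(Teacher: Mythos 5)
Your proof is correct and follows essentially the same route the paper intends: the corollary is presented as an immediate consequence of the argument in the proof of Theorem~1.1(a), case (ii), which is exactly your specialization of the Eliahou--Kervaire formula to generators of $\Gin(I_X)\subset(x_0,\ldots,x_{e-1})^{\ell+1}$, combined with the cancellation-principle comparison $\beta_{i,j}(X)\le\beta_{i,j}(R/\Gin(I_X))$. Your additional bookkeeping for $j<\ell$ (the generator set in degree $j+1\le\ell$ is empty) is the right, and only, extra observation needed.
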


\begin{remark}[Characteristic $p$ case]\label{remk_ch_p}
Although we made the assumption that the base field $\kk$ has characteristic zero at the beginning of this section, most of results in the section still hold outside of low characteristics; see \cite[theorem~15.23]{E1}. For instance, 
Theorem~\ref{thm A} holds for any characteristic $p$ such that $p>\reg(I_X)$, where $\reg(I_X)$ is equal to the maximum of degrees of monomial generators in $\Gin(I_X)$ with respect to the degree reverse lexicographic order.  
\end{remark}

\section{Property $\N_{d,p}$ and Syzygies}\label{sect_Ndp}

\subsection{Geometry of property $\N_{d,p}$}
%\subsection{The syzygetic Bezout theorem and applications}

In this subsection, we assume that the base field $\kk$ is algebraically closed of any characteristic. We obtain two geometric implications of property $\mathbf{N}_{d,p}$ via projection method and the elimination mapping cone sequence; see \cite{AK2,HK2}. For the remaining of the paper, we call a reduced projective scheme $X\subset\P^N$ an \ti{algebraic set} (see also \cite[chapter 5]{E2}).

\begin{thm}[Syzygetic B\'ezout theorem]\label{thm:20160115-01}
Let $X\subset \P^{n+e}$ be a non-degenerate algebraic set of dimension $n$ satisfying $\N_{d,p}$
with $2\le d$ and $p\leq e$. Suppose that $L \subset \mathbb P^{n+e}$ is any linear space of
dimension $p$ whose intersection with $X$ is zero-dimensional. Then
\begin{itemize}
\item [(a)] $\length(L\cap X)\le \binom{d-1+p}{p}.$
\item [(b)] Moreover, if $\length(L\cap X)=\binom{d-1+p}{p}$, then for $1\le k\le d-1$ the base locus of a linear system $|H^0(\mathcal I_{X/\mathbb P^{n+e}}(k))|$ contains the multisecant space $L$ .
\end{itemize}
\end{thm}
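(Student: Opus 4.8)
The plan is to prove the Syzygetic Bézout Theorem by induction on $p$, using the inner projection method that underlies the whole paper: project $X$ from a point of the multisecant space $L$ and track how both the length of the section and the syzygies behave. Recall that property $\mathbf{N}_{d,p}$ on $X\subset\P^{n+e}$ says the Betti table is $d$-regular up to homological degree $p$, i.e. $\beta_{i,j}(X)=0$ for $i\le p$ and $j\ge d$, equivalently $(I_X)_{\le d-1}$ generates an ideal whose first $p$ syzygies are linear. The base case $p=1$ is essentially classical: if $L$ is a line meeting $X$ in a finite scheme $Z$, then $\mathbf{N}_{d,1}$ (which already forces $(I_X)_{\le d-1}=0$ in the nondegenerate setting, or more precisely forces the generators of $I_X$ in degrees $\le d-1$ to cut $L$ out nicely) gives $\length(Z)\le d$, since otherwise a form of degree $\le d-1$ vanishing on $d$ collinear points would have to vanish on all of $L$ and one produces a nonlinear syzygy or a low-degree equation, contradicting $\mathbf{N}_{d,1}$.

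For the inductive step I would pick a general point $q\in L$ and consider the projection $\pi_q\colon X\dashrightarrow \overline{X}\subset\P^{n+e-1}$. The key input is the \emph{elimination mapping cone sequence} of \cite{HK2,AK2}, which relates the graded Betti numbers of $X$ to those of $\overline{X}$ and of the partial elimination ideals; under property $\mathbf{N}_{d,p}$ one deduces that $\overline{X}$ satisfies $\mathbf{N}_{d,p-1}$ (this is exactly the mechanism by which $\mathbf{N}_{d,p}$ propagates downward under inner projection). The line $L$ projects to a linear space $\overline{L}=\pi_q(L)$ of dimension $p-1$, and $\length(\overline{L}\cap\overline{X})\ge\length(L\cap X)-\ell_q$, where $\ell_q$ is the local contribution at $q$; choosing $q$ a reduced point of $L\cap X$ (generic, so length one there, or more carefully the largest-length point) gives $\length(\overline{L}\cap\overline{X})\ge \length(L\cap X)-1$. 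Applying the inductive hypothesis, $\length(L\cap X)-1\le\binom{d-1+(p-1)}{p-1}$, and then Pascal's rule $\binom{d-1+p}{p}=\binom{d-2+p}{p-1}+\binom{d-1+p}{p-1}$ — wait, the clean identity is $\binom{d-1+p}{p}=\binom{d-1+(p-1)}{p-1}+\binom{d-2+p}{p}$, so one needs the recursion to close with the extra $+1$; this forces a slightly more careful bookkeeping, projecting from several points of $L$ in sequence and summing the unit contributions against the telescoping sum $\sum_{k}\binom{d-1+k}{k}=\binom{d+p}{p}-1$-type identity, or equivalently doing the whole length count in one go by projecting $L$ down to a point.

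For part (b), the equality case, I would argue that equality in the length bound forces equality at every stage of the induction, hence $\overline{X}$ sits in the extremal case for $\mathbf{N}_{d,p-1}$; unwinding, the section $L\cap X$ is a length-$\binom{d-1+p}{p}$ scheme that imposes \emph{independent} conditions on forms of each degree $k\le d-1$ on $L$ (equivalently, $H^0(\mathcal I_{X\cap L/L}(k))=0$ for $k\le d-1$, which is precisely condition $\mathrm{ND}(d-1)$ restricted to $L$, matching the length $\binom{d-1+p}{p}=\dim\kk[L]_{\le d-1}$... no, $=h^0(\mathcal{O}_{\P^p}(d-1))$). Then for any $k\le d-1$ and any $F\in H^0(\mathcal I_{X}(k))$, the restriction $F|_L$ vanishes on a length-$\binom{p+d-1}{p}$ subscheme of $\P^p$ while $\dim H^0(\mathcal O_{\P^p}(k))=\binom{p+k}{p}\le \binom{p+d-1}{p}$, so $F|_L\equiv 0$, i.e. $L$ lies in the base locus of $|H^0(\mathcal I_X(k))|$; the remaining subtlety is justifying that the length-$\binom{p+d-1}{p}$ scheme $L\cap X$ genuinely imposes independent conditions in \emph{every} degree $\le d-1$ (not just the top one) — this is where the full strength of $\mathbf{N}_{d,p}$, rather than just a single Betti number, is consumed, and I expect this to be the main obstacle, to be handled by the same mapping-cone analysis showing no "ghost" syzygies appear below degree $d$ in the first $p$ steps.
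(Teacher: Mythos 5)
Your proposal contains a genuine gap in part (a), and you half-noticed it yourself. The induction on $p$ via projection from a single point of $L$ does not close: if each step only costs you ``$-1$'' in length, you would end up proving $\length(L\cap X)\le\binom{d+p-2}{p-1}+1$, which is strictly stronger than the stated sharp bound $\binom{d-1+p}{p}$ for $d\ge 2$, $p\ge 2$ --- and the stated bound \emph{is} attained (by ACM schemes with $d$-linear resolution), so any argument yielding a stronger bound must be broken. The break is in the two unproved transport claims. First, property $\N_{d,p}$ on $X$ does not descend to property $\N_{d,p-1}$ on the \emph{image} $\pi_q(X)$ of an inner projection; what the elimination mapping cone of \cite{AK2,HK2} actually gives (and what the paper uses) is a statement about $R/I_X$ viewed as a module over the smaller polynomial ring, namely that its minimal generators as an $S_p$-module live in degrees $\le d-1$ with $\beta^{S_p}_{0,i}\le\binom{p-1+i}{i}$. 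The image $\pi_q(X)$ can acquire new high-degree equations coming from the multiple locus, so your inductive hypothesis cannot be applied to it. Second, the length bookkeeping $\length(\overline{L}\cap\overline{X})\ge\length(L\cap X)-1$ is not justified: projecting $L$ from a point of $L\cap X$ can identify many of the remaining points of $L\cap X$ (they all lie in $L$, so collinearity with $q$ inside $L$ is common), and $\overline{L}\cap\overline{X}$ need not remain zero-dimensional.

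The paper avoids induction entirely: it chooses a $(p-1)$-plane $\Lambda\subset L$ with $\Lambda\cap X=\emptyset$ and projects from $\Lambda$ in one step, so that $L\cap X$ becomes a single fiber $\pi_\Lambda^{-1}(q)$ over $q=\pi_\Lambda(L\setminus\Lambda)$. Then $\N_{d,p}$ gives a surjection $\bigoplus_{i=0}^{d-1}S_p(-i)^{\beta^{S_p}_{0,i}}\twoheadrightarrow R/I_X$ of $S_p$-modules; tensoring its sheafification with $\mathcal{O}(d-1)\otimes\kk(q)$ yields a surjection onto $H^0(\mathcal{O}_{\pi_\Lambda^{-1}(q)}(d-1))$, whence $\length(L\cap X)\le\sum_{i=0}^{d-1}\binom{p-1+i}{i}=\binom{d-1+p}{p}$ --- the binomial identity you were trying to telescope is realized directly as the count of module generators by degree. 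Your part (b) is essentially right in spirit and matches the paper: equality forces that surjection to be an isomorphism, hence $H^0(\mathcal{I}_{\pi_\Lambda^{-1}(q)/L}(d-1))=0$, and then any $F\in H^0(\mathcal{I}_X(k))$ with $k\le d-1$ must restrict to zero on $L$ (the ``subtlety'' you flag about lower degrees is automatic: a nonzero $F|_L$ of degree $k$ would produce nonzero sections of $\mathcal{I}_{\pi_\Lambda^{-1}(q)/L}(d-1)$ after multiplying by forms of degree $d-1-k$). But as written, part (b) rests on an equality-case analysis of an induction that does not go through.
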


\begin{remark} We would like to make some remarks on this result as follows:
\begin{itemize}
\item [(a)] If $p=1$ then it is straightforward by B\'ezout's theorem. Thus, Theorem \ref{thm:20160115-01} can be regarded as a syzygetic generalization to multisecant linear spaces when $p\ge 2$. 
\item [(b)] Note that in the theorem the length bound itself can be also obtained from \cite[theorem 1.1]{EGHP1}. We provide an alternative proof on it using geometric viewpoint of projection and further investigate the situation in which the equality holds.
\end{itemize}
\end{remark}

\begin{proof}[Proof of Theorem \ref{thm:20160115-01}]
(a) It is obvious when $p=1$. Now, let $X$ be an algebraic set satisfying the property $\N_{d,p}, p\ge 2$ and suppose that
$L \subset \mathbb P^{n+e}$ is a linear space of dimension $p$ whose intersection with $X$
is zero-dimensional.

Choose a linear subspace $\Lambda \subset L$ of dimension $p-1$ with homogeneous coordinates
$x_0, x_1, \ldots, x_{p-1}$  such that $X\cap \Lambda =\emptyset$. Consider a projection
$\pi_{\Lambda}: X \to \pi_{\Lambda}(X)\subset \P^{n+e-p}$. Then, $L\cap X$ is a fiber of
$\pi_{\Lambda}$ at the point $\pi_{\Lambda}(L\setminus \Lambda)\in \pi_{\Lambda}(X)$.
The key idea is to consider the syzygies of $R/I_X$ as an $S_p=\kk[x_p,\ldots,x_{n+e}]$-module
which is the coordinate ring of $\P^{n+e-p}$.
By \cite[corollary~2.4]{AK2}, $R/I_X$ satisfies ${\textup{\textbf {N}}}^{S_p}_{d, 0}$ as an $S_p=\kk[x_p,\ldots,x_{n+e}]$-module, i.e. we have the following surjection
\begin{equation}\label{eq:20160115-101}
S_{p}\oplus S_{p}(-1)^{p}\oplus S_{p}(-2)^{\beta^{S_{p}}_{0,2}} \oplus \cdots\oplus S_{p}(-d+1)^{\beta^{{S_p}}_{0,d-1}}
\st{\varphi_0} R/I_X \to 0.
\end{equation}
Sheafifying \eqref{eq:20160115-101},
we have
$$
\cdots\rightarrow \mathcal O_{\P^{n+e-p}}\oplus\mathcal O_{\P^{n+e-p}}(-1)^{p}\oplus
\mathcal O_{\P^{n+e-p}}(-2)^{\beta^{S_p}_{0,2}}\oplus \cdots \oplus \mathcal O_{\P^{n+e-p}}(-d+1)^{\beta^{S_p}_{0,d-1}} \st{\widetilde{\varphi_p}}\pi_{{\Lambda}_{*}}{\mathcal O_X} \to 0.
$$
Say $q=\pi_{\Lambda}(L\setminus \Lambda)$. By tensoring $\mathcal O_{\P^{n+e-p}}(d-1)\otimes \kk(q)$, we have the surjection on vector spaces:
\begin{equation}\label{eq:20160114-102}
\left[\bigoplus_{0\le i\le d-1} \mathcal O_{\P^{n+e-p}}(d-1-i)^{\beta^{S_{p}}_{0,i}}\right]
\otimes \kk(q)\twoheadrightarrow H^0(\langle \Lambda, q\rangle, {\mathcal O}_{{\pi_{\Lambda}}^{-1}(q)}(d-1)).
\end{equation}
Note that by [\textit{ibid.} corollary 2.5] $\beta^{S_{p}}_{0,i}\leq \binom{p-1+i}{i}=h^0(\mathcal O_{\Lambda}(i))$ for $0\le i\le d-1$ in \eqref{eq:20160114-102}. So we have
$$\dim_{\kk}H^0(\langle \Lambda, q\rangle, {\mathcal O}_{{\pi_{\Lambda}}^{-1}(q)}(d-1))=\text{length}(L\cap X)\le \sum_{i=0}^{d-1} \beta^{S_p}_{0,i}\leq \sum_{i=0}^{d-1} \binom{p-1+i}{i}=\binom{d-1+p}{p}.$$

(b) Now assume that $\text{length}(L\cap X)=\binom{d-1+p}{p}$.
From the above inequalities, we see that $\beta^{S_{p}}_{0,i}=\binom{p-1+i}{i}$ for every $i$.
Hence the map in \eqref{eq:20160114-102} is an isomorphism. Thus, there is no equation
of degree $d-1$ vanishing on ${\pi_{\Lambda}}^{-1}(q)\subset L=\langle \Lambda, q\rangle$
(i.e. $H^0(\mathcal I_{{\pi_{\Lambda}}^{-1}(q)/L}(d-1))=0$~).
So, if $F\in H^0(\mathcal I_{X/\mathbb P^{n+e}}(k))$ for $2\le k\le d-1$, then $F|_{L}$ vanishes on ${\pi_{\Lambda}}^{-1}(q)\subset L$ and this implies that $F|_{L}$ is identically zero. Thus, $L$ is contained in $Z(F)$, the zero locus of $F$ as we claimed.
\end{proof}

%\begin{corollary}[Rational map by the linear system $|H^0(\mathcal{I}_X(k))|$]
%\tv{ABC}
%\end{corollary}

%\noindent\textbf{Locus of $d$-secant lines} 
Now, we think of another effect of property $\textbf{N}_{d,p}$ on loci of $d$-secant lines. For this purpose, let us consider an outer projection $\pi_{q}:X\to \pi_{q}(X)\subset \mathbb P^{n+e-1}=\Proj(S_1)$, $S_1=\kk[x_1, x_{2},\ldots,x_{n+e}]$ from a point $q=(1,0,\cdots,0)\in \big(\Sec(X)\cup \Tan(X)\big)\setminus X$. We are going to consider the locus on $X$ engraved by $d$-secant lines passing through $q$ via partial elimination ideals (abbr. PEIs) theory as below. 

When $f\in (I_X)_m$ has a leading term $\ini(f)=x_0^{d_0}\cdots x_{n+e}^{d_{n+e}}$ in the lexicographic order, we set $d_{x_0}(f)=d_0$,
the leading power of $x_0$ in $f$. Then it is well known (e.g. \cite[subsection 2.1]{HK2}) that $K_0(I_X):=\bigoplus_{m\ge 0}\big\{f \in (I_X)_m \mid d_{x_0}(f)=0\big\}=I_X\cap S_1$ is the saturated ideal defining $\pi_{q}(X)\subset \mathbb P^{n+e-1}$.

Let us recall some definitions and basic properties of partial elimination ideals (see also e.g. \cite[chap. 6]{G} or \cite{HK2} for details).
\begin{definition}[Partial Elimination Ideal]\label{def_partial elimi ideals}
Let $I\subset R$ be a homogeneous ideal and let  \[\tilde{K}_i(I)=\bigoplus_{m\ge 0}\big\{f\in I_{m}\mid d_{x_0}(f)\leq i\big\}.\]
If $f\in \tilde{K}_i(I)$, we may write uniquely $f=x_0^i\bar{f}+g$ where $d_{x_0}(g)<i$ and define $K_i(I)$ by the image of $\tilde{K}_i(I)$
in $S_1$ under the map $f\mapsto \bar{f}$. We call $K_{i}(I)$ the $i$-th partial elimination ideal of $I$.
\end{definition}
Note that $K_{0}(I)=I\cap S_1$ and there is a short exact sequence as graded $S_1$-modules
\begin{equation}\label{eq:201}
0\rightarrow \frac{\tilde{K}_{i-1}(I)}{\tilde{K}_{0}(I)}
\rightarrow \frac{\tilde{K}_{i}(I)}{\tilde{K}_{0}(I)}\rightarrow
K_{i}(I)(-i)\rightarrow 0.
\end{equation}
In addition, we have the filtration on partial elimination ideals of $I$:
\[K_0(I)\subset K_1(I)\subset K_2(I)\subset \cdots \subset K_i(I)\subset \cdots \subset S_1=\kk[x_1, x_{2},\ldots,x_{n+e}].\]

It is well-known that for $i\geq 1$, the $i$-th partial elimination ideal $K_i(I_X)$ set-theoretically defines
$$Z_{i+1}:=\big\{y \in \pi_{q}(X)\mid \mult_y(\pi_{q}(X))\ge i+1\big\}$$

(e.g. \cite[proposition 6.2]{G}). Using this PEIs theory, we can describe the $d$-secant locus $$\Sigma_{d}(X):=\{x\in X \mid {\pi_q}^{-1}(\pi_{q}(x)) \text { has length d} ~\}$$ as a hypersurface $F$ of degree $d$ in the linear span $\big\langle F, q\big\rangle$ provided that $X$ satisfies $\N_{d,2}~(d\ge 2)$.

\begin{thm}[Locus of $d$-secant lines]\label{Loci_d_secant_lines}
Let $X\subset \mathbb P^{n+e}$ be a nondegenerate integral variety of dimension $n$ satisfying
$\N_{d,2}~(d\ge 2)$. For a projection $\pi_{q}:X\to \pi_{q}(X)\subset \P^{n+e-1}$ where $q\in \big(\Sec(X)\cup \Tan(X)\big)\setminus X$,
consider the $d$-secant locus $\Sigma_{d}(X)$. Then, we have
\begin{itemize}
\item[(a)] $\Sigma_d(X)$ is either empty or a hypersurface $F$ of degree $d$ in the linear span $\big\langle F, q\big\rangle$;
\item[(b)] $Z_{d}=\pi_{q}(\Sigma_{d}(X))$ is either empty or a linear subspace in $\pi_{q}(X)$ parametrizing the locus of $d$-secant lines through $q$;
\item[(c)] For a point $q\in \Sec(X)\setminus \big(\Tan(X)\cup X\big)$, there is a unique $d$-secant line through $q$ if $Z_{d}\neq \emptyset$.
\end{itemize}
\end{thm}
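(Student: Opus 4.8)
The plan is to run the same partial--elimination--ideal machinery used in the proof of Theorem~\ref{thm:20160115-01}, but now tracking one extra variable, i.e. working with the $i$-th partial elimination ideals $K_i(I_X)$ of the outer projection from $q$ rather than with the Rees-type surjection of \eqref{eq:20160115-101}. First I would record the geometric meaning: a point $x\in\Sigma_d(X)$ lies on a $d$-secant line through $q$, and $\Sigma_d(X)=\pi_q^{-1}(Z_d)$ set-theoretically, where $Z_d=\{\,\overline{x}\in\pi_q(X)\mid \mult_{\overline x}\pi_q(X)\ge d\,\}$ is cut out set-theoretically by $K_{d-1}(I_X)$. So (a) will follow once I understand $K_{d-1}(I_X)$ well enough to see that $Z_d$ is linear and that $\Sigma_d(X)$ meets the plane $\langle F,q\rangle$ in a degree-$d$ hypersurface; (b) is then essentially a restatement, and (c) is the case where $q$ is a genuine secant (not tangent) point, so the fibre ${\pi_q}^{-1}(\pi_q(x))$ is reduced of length exactly $d$, forcing $Z_d$ to be a single point, hence a unique $d$-secant line.

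The heart of the argument is to extract from property $\mathbf N_{d,2}$ the statement that $K_{d-1}(I_X)$ is generated by linear forms (so $Z_d$ is a linear space), and that the top partial elimination ideal $K_d(I_X)$ vanishes in low degrees so that the $d$-secant locus is genuinely a degree-$d$ phenomenon and no more. The mechanism is the short exact sequence \eqref{eq:201} together with the filtration $K_0(I_X)\subset K_1(I_X)\subset\cdots$: the projection $\pi_q$ drops at most one from the relevant Betti-number count (this is the ``one outer projection'' input, cf. \cite{AK2,HK2}), so property $\mathbf N_{d,2}$ on $X$ pushes down to a statement about the $S_1$-module $R/I_X$ and hence about the successive quotients $\tilde K_i(I_X)/\tilde K_{i-1}(I_X)\cong K_i(I_X)(-i)$. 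Concretely I would show: since $X$ has no equations of degree $\le d-1$ beyond the linear strand constraints, each $K_i(I_X)$ for $i\le d-1$ can carry generators only in degree $\le d-1-i$, and an induction up the filtration using \eqref{eq:201} forces $K_{d-1}(I_X)$ to be generated in degree $\le 0$... more precisely generated by linear forms after accounting for the shift, which makes $Z_d$ a linear subspace of $\pi_q(X)\subset\P^{n+e-1}$. Restricting to the plane $\langle F,q\rangle$ spanned by that linear space and $q$, the fibre structure of $\pi_q$ over $Z_d$ exhibits $\Sigma_d(X)$ there as the zero scheme of the single degree-$d$ form coming from $K_d(I_X)$, giving the hypersurface $F$.

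For part (c): when $q\in\Sec(X)\setminus(\Tan(X)\cup X)$ a generic secant through $q$ meets $X$ in two distinct points, and $q$ not tangent means the branches are transverse; the hypothesis that there is \emph{some} $d$-secant line (i.e. $Z_d\neq\emptyset$) combined with linearity of $Z_d$ from part (b) means $Z_d$ is a \emph{positive}-dimensional linear space only if there is a whole family of $d$-secants, which I would rule out by a dimension count against $\Sec(X)$ and $q\notin\Tan(X)$: a one-parameter family of $d$-secant lines through a fixed point $q$ would sweep out a cone forcing $q$ into the tangent variety or forcing $X$ to be degenerate. Hence $Z_d$ is zero-dimensional and linear, so a single reduced point, giving the unique $d$-secant line.

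The main obstacle I anticipate is the bookkeeping in the induction up the filtration of partial elimination ideals — controlling exactly in which degrees $K_i(I_X)$ acquires generators, and making the ``$\pi_q$ costs at most one'' principle precise enough to conclude that $K_{d-1}(I_X)$ is \emph{linearly} generated rather than merely generated in some bounded degree. Getting the shifts in \eqref{eq:201} right across all $i\le d-1$, and separating the genuinely new degree-$d$ contribution (which becomes $F$) from the lower-degree data (which becomes the linear space $Z_d$), is the delicate part; the geometric statements (a)--(c) are then comparatively formal consequences, with (c) needing only a transversality/dimension argument using $q\notin\Tan(X)$.
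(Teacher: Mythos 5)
Your plan reproduces the paper's proof essentially step for step: property $\N_{d,2}$ descends to $\N_{d,1}$ for $R/I_X$ as an $S_1$-module (the ``projection costs one'' input from \cite{AK2}), which shows $\tilde{K}_{d-1}(I_X)$, and hence $\tilde{K}_{d-1}(I_X)/K_0(I_X)$, is generated in degrees $\le d$; the exact sequence \eqref{eq:201} at level $i=d-1$, with its twist by $-d+1$, then forces $K_{d-1}(I_X)$ to be generated by linear forms, so $Z_d$ is a linear space, $\Sigma_d(X)$ is a degree-$d$ hypersurface in $\langle Z_d,q\rangle$ via the $d:1$ projection, and positive-dimensionality of $\Sigma_d(X)$ would force $q\in\Tan(\Sigma_d(X))\subset\Tan(X)$, giving (c) --- all exactly as in the paper. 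Two small corrections to your bookkeeping: the right bound is that $K_i(I_X)$ acquires generators only in degrees $\le d-i$ (not $d-1-i$, which would make $K_{d-1}$ the unit ideal), and no induction up the whole filtration is needed --- the paper uses only the single surjection $\tilde{K}_{d-1}(I_X)/K_0(I_X)\twoheadrightarrow K_{d-1}(I_X)(-d+1)$, so the ``delicate induction'' you anticipate collapses to this one shift, and the degree-$d$ hypersurface comes from the $d:1$ morphism onto $Z_d$ rather than from a generator of $K_d(I_X)$.
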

\begin{proof}
(a): Since $R/I_X$ satisfies $\textbf{N}_{d,2}$, it also satisfies $\textbf{N}_{d,1}$ as an $S_1$-module and we have the following exact sequence~:
$$\rightarrow\cdots \rightarrow \bigoplus^{d-1}_{j=1}S_1(-1-j)^{\beta_{1,j}^{S_1}} \st{\varphi_1} \bigoplus^{d-1}_{i=0}S_1(-i) \st{\varphi_0}R/I_X \rightarrow  0.
$$
Furthermore, $\ker \varphi_0$ is just $\tilde{K}_{d-1}(I_X)$ and we have a surjection
$$\cdots\rightarrow \bigoplus^{d-1}_{j=1}S_1(-1-j)^{\beta_{1,j}^{S_1}} \st{\varphi_1} \tilde{K}_{d-1}(I_X)\rightarrow 0.$$
Therefore, $\tilde{K}_{d-1}(I_X)$ is generated by elements of at most degree $d$.
%and an exact sequence:
%\begin{equation}\label{eq:401}
%0\rightarrow \tilde{K}_{d-1}(I_X) \rightarrow \oplus_{i=0}^{d-1}
%S_1(-i) \rightarrow R/I_X\rightarrow 0
%\end{equation}

Now consider the following commutative diagram of $S_1$-modules with  $K_{0}(I_X)=I_X\cap S_1$:
\begin{equation}\label{diagram:302-1}
\begin{array}{ccccccccccccccccccccccc}
&&0&&0&&0&&\\[1ex]
&&\downarrow &&\downarrow&&\downarrow&&\\[1ex]
0&\rightarrow& K_{0}(I_X) & \rightarrow & S_1& \rightarrow &
S_1/K_{0}(I_X)&\rightarrow & 0 \\[1ex]
&&\downarrow && \downarrow && \,\,\,\downarrow{\tilde \alpha} & \\[1ex]
0&\rightarrow& \tilde{K}_{d-1}(I_X) & \rightarrow &
\oplus_{i=0}^{d-1} S_1(-i)& \st{\varphi_0} & R/I_X
&\rightarrow & 0\\[1ex]
&&\downarrow &&\downarrow && \downarrow &&\\[1ex]
0&\rightarrow& {\tilde{K}_{d-1}(I_X)}/K_{0}(I_X) & \rightarrow &
\oplus_{i=1}^{d-1} S_1(-i)& \rightarrow& \coker\,\, \tilde \alpha &\rightarrow &0\\[1ex]
&&\downarrow &&\downarrow&&\downarrow&&\\[1ex]
&&0&&0&&0&&
\end{array}
\end{equation}

From the left column sequences in the diagram~(\ref{diagram:302-1}), $\tilde{K}_{d-1}(I_X)/{K_0(I_X)}$ is also generated
by at most degree $d$ elements.
On the other hands, we have a short exact sequence from (\ref{eq:201}) :
\begin{equation}\label{diagram:partial}
0\rightarrow \frac{\tilde{K}_{d-2}(I_X)}{K_0(I_X)} \rightarrow \frac{\tilde{K}_{d-1}(I_X)}{K_0(I_X)}\rightarrow
K_{d-1}(I_X)(-d+1)\rightarrow 0,
\end{equation}
Hence, $K_{d-1}(I_X)$ is generated by at most linear forms. So, $Z_{d-1}$ is either empty or a linear space.
Since $\pi_{q}:\Sigma_d(X)\twoheadrightarrow Z_{d}\subset \pi_{q}(X)$ is a $d:1$ morphism, $\Sigma_q(X)$ is a hypersurface of degree $d$
in $\big\langle Z_{d-1}, q\big\rangle$.  For a proof of (c), if $\dim \Sigma_d(X)$ is positive, then clearly, $q\in \Tan\,\Sigma_q(X)\subset \Tan(X)$.
So,  $\dim \Sigma_d(X)=\dim Z_{d}=0$ and there is a unique $d$-secant line through $q$.
\end{proof}

In particular, in the case of $d=2$, \textit{entry locus} of $X$ (i.e. locus of 2-secant lines through an outer point) is a quadric hypersurface, which was very useful to classify non-normal del Pezzo varieties in \cite{BP} by Brodmann and Park.

\subsection{Syzygetic rigidity for $d$-regularity}

In particular, if $p=e$ then we have the following corollary of Theorem \ref{thm:20160115-01} with characterization of the extremal cases.

\begin{corollary}\label{cor:bezout}
Let $X\subset \P^{n+e}$ be any non-degenerate algebraic set over an algebraically
closed field $\kk$ of characteristic zero. Suppose that $X$ satisfies $\mathbf {N}_{d,e}$ for some $d\ge 2$. Then, we have
$$\deg(X)\le \binom{d-1+e}{e}$$
and the following are equivalent:
\begin{itemize}
\item[(a)] $\deg(X)=\binom{d-1+e}{e}$;
\item[(b)] $X$ is arithmetically Cohen-Macaulay~(ACM) with $d$-linear resolution.
\end{itemize}
%In this case, $\beta_{i,d-1}(X)={i+d-2\choose d-1}{e+d-1\choose i+d-1}$ with $1\le i\le e$.
\end{corollary}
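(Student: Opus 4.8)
The plan is to read everything off Theorem~\ref{thm:20160115-01} applied with $p=e$. Since $X$ has codimension $e$, a general linear space $L$ of dimension $e$ meets $X$ in a zero-dimensional scheme with $\length(L\cap X)=\deg(X)$ (here the characteristic zero hypothesis guarantees that the general section behaves as expected), so part~(a) of Theorem~\ref{thm:20160115-01} yields $\deg(X)\le\binom{d-1+e}{e}$ at once. For the direction (b)$\Rightarrow$(a): if $X$ is ACM with $d$-linear resolution then $(I_X)_{\le d-1}=0$, so by Example~\ref{basic_examples}(a) it satisfies $\mathrm{ND}(d-1)$, and the degree statement of Theorem~\ref{thm A}(b) (with $\ell=d-1$) gives $\deg(X)=\binom{e+d-1}{d-1}=\binom{d-1+e}{e}$; alternatively one computes directly that such an $X$ has $h$-vector $\bigl(\binom{e-1}{e-1},\binom{e}{e-1},\dots,\binom{e+d-2}{e-1}\bigr)$, which sums to $\binom{d-1+e}{e}$ by the hockey-stick identity.

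The substantive direction is (a)$\Rightarrow$(b). Assume $\deg(X)=\binom{d-1+e}{e}$. A general $L$ of dimension $e$ then cuts $X$ in a scheme of length $\binom{d-1+e}{e}=\binom{d-1+p}{p}$ with $p=e$, so we are in the equality case of Theorem~\ref{thm:20160115-01}(b); I would reuse the \emph{internals} of its proof, for a general $(e-1)$-plane $\Lambda\subset L$ with $X\cap\Lambda=\emptyset$. Two things come out of that argument: (i) $H^{0}(\mathcal I_{X\cap L/L}(d-1))=0$, which by Remark~\ref{basic_properties}(a)--(b) is exactly condition $\mathrm{ND}(d-1)$ for $X$, since this condition is detected on a general point section; and (ii) the Betti numbers of $R/I_X$ viewed over $S_{e}=\kk[x_{e},\dots,x_{n+e}]$ satisfy $\beta^{S_{e}}_{0,i}=\binom{e-1+i}{i}$ for $0\le i\le d-1$, so that $R/I_X$ is minimally generated as a graded $S_{e}$-module by $\sum_{i=0}^{d-1}\binom{e-1+i}{i}=\binom{d-1+e}{e}$ homogeneous elements.

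Now the general projection $\pi_{\Lambda}\colon X\to\P^{n}$ is finite, so $R/I_X$ is a finitely generated graded $S_{e}$-module of $\rank$ equal to $\deg(X)=\binom{d-1+e}{e}$; a graded surjection onto it from a free $S_{e}$-module of the same rank has kernel that is torsion-free of rank zero, hence zero, so $R/I_X$ is a free $S_{e}$-module. Since $\pi_{\Lambda}$ is finite this gives $\depth_{R}(R/I_X)=\depth_{S_{e}}(R/I_X)=n+1=\dim(R/I_X)$, i.e.\ $X$ is arithmetically Cohen--Macaulay. Finally, ACM-ness gives $\pd(R/I_X)=e$; condition $\mathrm{ND}(d-1)$ forces $(I_X)_{\le d-1}=0$ and hence $\beta_{i,j}(X)=0$ for all $j\le d-2$; and property $\mathbf{N}_{d,e}$ gives $\beta_{i,j}(X)=0$ for $i\le e$ and $j\ge d$. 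Together these confine the whole minimal free resolution of $R/I_X$ to the single row $j=d-1$, so $X$ has an ACM $d$-linear resolution, and $\deg(X)=\binom{d-1+e}{e}$ is then the minimal-degree claim.

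The step I expect to be the crux, and the most delicate, is the passage to arithmetic Cohen--Macaulayness via $S_{e}$-freeness. One must treat ``algebraic set'' with care: the identity $\rank_{S_{e}}(R/I_X)=\deg(X)$ rests on $R/I_X$ being a faithful finite $S_{e}$-module (which holds because the top-dimensional part of $X$ dominates $\P^{n}$ under a general projection), and $S_{e}$-freeness then forces $X$ to be equidimensional a posteriori; one must also verify that $\depth$ is genuinely preserved along the finite extension $S_{e}\hookrightarrow R/I_X$. A secondary point is that the equality-case analysis leans on the proof of Theorem~\ref{thm:20160115-01}(b), not merely on its statement.
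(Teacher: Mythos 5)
Your proposal is correct, and its first half coincides with the paper's argument: the bound $\deg(X)\le\binom{d-1+e}{e}$ comes from Theorem~\ref{thm:20160115-01} with $p=e$ applied to a general $e$-plane, and in the equality case one reuses the internals of that proof to extract both condition $\mathrm{ND}(d-1)$ and $\beta^{S_e}_{0,i}=\binom{e-1+i}{i}$ for $0\le i\le d-1$. Where you genuinely diverge is the conversion of this data into ACM-ness with $d$-linear resolution. The paper does not argue freeness: it uses $\mathrm{ND}(d-1)$ to invoke Theorem~\ref{thm A}(a), giving $\beta_{e,d-1}(X)\le\binom{e+d-2}{d-1}$, and the inequality $\beta^{S_e}_{0,d-1}\le\beta_{e,d-1}(X)$ from \cite[corollary~2.4]{AK2} for the reverse direction; the forced equality is the extremal value at $i=e$, so Theorem~\ref{thm A}(b) (whose proof runs through generic initial ideals and the cancellation principle) yields the conclusion. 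You instead count the minimal $S_e$-generators of $R/I_X$ (there are $\sum_{i=0}^{d-1}\binom{e-1+i}{i}=\binom{d-1+e}{e}$ of them, since $\mathbf{N}^{S_e}_{d,0}$ excludes generators in degree $\ge d$), match this against $\rank_{S_e}(R/I_X)=\deg(X)$, and kill the kernel of $\varphi_0$ by torsion-freeness, so that $R/I_X$ is $S_e$-free, hence ACM; linearity then follows by confining the Betti table using $\pd(R/I_X)=e$, $(I_X)_{\le d-1}=0$ and $\mathbf{N}_{d,e}$. Both routes are valid. Yours avoids the Gin-based extremal characterization of Theorem~\ref{thm A}(b) entirely (only the definition of $\mathrm{ND}(d-1)$ is used), at the price of the faithfulness and equidimensionality bookkeeping you correctly flag — the rank computation only sees the top-dimensional part of $X$, which is harmless because $\deg(X)$ is read off the Hilbert polynomial. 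Your explicit verification of (b)$\Rightarrow$(a) is also a small improvement over the paper, which dispatches that direction in one sentence.
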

\begin{proof}
 It suffices to show that (a) implies (b). By the assumption that $\deg(X)$ is maximal,
 $\text{length}(L\cap X)=\binom{d-1+e}{e}$ for a generic linear space $\Lambda$ of dimension $e$.
 From a proof of Theorem~\ref{thm:20160115-01}, we see that there is no equation
of degree $d-1$ vanishing on ${\pi_{\Lambda}}^{-1}(q)\subset L=\langle \Lambda, q\rangle$
(i.e. $H^0(\mathcal I_{{\pi_{\Lambda}}^{-1}(q)/L}(d-1))=0$~). This means $X$ satisfies ND$(d-1)$
condition. In particular, it follows from Theorem~\ref{thm A} (a) that
$\beta_{e,d-1}(X)\le{e+d-2\choose d-1}.$

We also see from \cite[corollary~2.4]{AK2} that $\beta^{S_e}_{0,d-1}\leq \beta^R_{e,d-1}=\beta_{e,d-1}(X)$
because $X$ satisfies $\N_{d,e}$.
Note that $\beta^{S_{e}}_{0,d-1}=\binom{e+d-2}{d-1}=h^0(\mathcal O_{\Lambda}(d-1))$ in \eqref{eq:20160114-102}.
Therefore, $$\ds\beta_{e,d-1}(X)={e+d-2\choose d-1}.$$
So, we conclude from Theorem~\ref{thm A} (b) that $X$ is ACM with $d$-linear resolution.
\end{proof}

\begin{remark}
The above corollary can also be proved by the generalized version of the multiplicity conjecture which was shown by Boij-S\"{o}derberg \cite{BoSo}. Not relying on Boij-S\"{o}derberg theory, here we give a geometric proof for the multiplicity conjecture in this special case.
\end{remark}

As a consequence of previous results, now we can derive a syzygetic rigidity for $d$-regularity as follows:

\begin{thmB_again}[Syzygetic rigidity for $d$-regularity] Let $X\subset\P^{n+e}$ be any algebraic set of codimension $e$ over an algebraically
closed field $\kk$ of $\ch(\kk)=0$ satisfying condition $\mathrm{ND}(d-1)$ for some $d\ge2$. If $X$ has property $\N_{d,e}$, then $X$ is $d$-regular (more precisely, $X$ has ACM $d$-linear resolution).
\end{thmB_again}
\begin{proof}
By Theorem \ref{thm A} and Corollary \ref{cor:bezout}, if $X$ satisfies both condition $\mathrm{ND}(d-1)$ and property $\mathbf{N}_{d,e}$, then the degree of $X$ should be equal to $\binom{d-1+e}{e}$ and this implies that $X$ has ACM $d$-linear resolution (in particular, $X$ is $d$-regular).
\end{proof}

We would like to note that Theorem \ref{syz_rigid} does not hold without condition $\mathrm{ND}(\ell)$ even though the given $X$ is an \textit{irreducible} variety.

\begin{example}[Syzygetic rigidity fails without condition $\mathrm{ND}(\ell)$]\label{non_ex_syz_rigidity}
Let $\mathbf{d}=(d_0,\ldots,d_s)$ be a strictly increasing sequence of integers and $\mathbb{B}(\mathbf{d})$ be the pure Betti table associated to $\mathbf{d}$; see \cite{BoSo}. Due to Boij-S\"{o}derberg theory, we can construct a Betti table  $\mathbb{B}_0$ as given by
\begin{center}
\texttt{\begin{tabular}{l|ccccc}
          & 0 & 1   &  2   &3&4    \\ \hline
    0    & 1 & -    & -   &-&-     \\
    1    & - & -    & -   &-&-     \\
    2    & - & -    & -   &-&-     \\
    3   & -  & 18 & 32 &16&-\\
    4    & - & -    & -   &-&1    
    \end{tabular}~,}
    \end{center}
    from the linear combination $\frac{4}{5}\mathbb{B}\big((0,4,5,6)\big)+\frac{1}{5}\mathbb{B}\big((0,4,5,6,8)\big)$. This $\mathbb{B}_0$ expects a curve $C$ of degree 16 and genus 13 in $\P^4$ with $h^1(\mathcal{O}_C(1))=1$ (i.e. $e=3$), which satisfies property $\N_{4,e}$, but not $4$-regular (i.e. Theorem \ref{syz_rigid} fails). This Betti  table can be realized as the one of a projection $C$ into $\P^4$ of a canonically embedded genus $13$ general curve $\widetilde{C}\subset\P^{12}$ from random $8$ points of $\widetilde{C}$. Note that $C$ is irreducible (in fact, smooth) and has no defining equations of degree less than $4$, but is not $\mathrm{ND}(3)$-curve because $\deg(C)=16\ngeq{3+3\choose3}=20$. Here is a \texttt{Macaulay 2} code for this:
    \smallskip
{\footnotesize
\begin{Verbatim}
loadPackage("RandomCanonicalCurves",Reload=>true);
setRandomSeed("alpha");
g=13; k=ZZ/32003;
S=k[x_0..x_(g-1)];
I=(random canonicalCurve)(g,S);
for i from 0 to 7 do P_i=randomKRationalPoint I;
L=intersect apply(8,i->P_i); R=k[y_0..y_4];
f=map(S,R,super basis(1,L));
RI=preimage(f,I); betti res RI
\end{Verbatim}
}
\end{example}

\section{Comments and Further Questions}\label{sect_question}

\noindent In the final section, we present some relevant examples and discuss a few open questions related to our main results in this paper.\\

\noindent{\textsf{I. Certificates of condition $\mathrm{ND}(\ell)$~}} First of all, from the perspective of this article, it would be very interesting to provide more situations to guarantee condition $\mathrm{ND}(\ell)$. As one way of thinking, one may ask where condition $\mathrm{ND}(\ell)$ does hold largely. For instance, as discussed in Remark \ref{NDk_in_large}, we can consider this problem  as follows: 

\begin{question}\label{Q_deg_bd}  
For given $e,\ell>0$, is there a function $f(e,\ell)$ such that any $X\subset\P^{n+e}$ of codimension $e$ is $\mathrm{ND}(\ell)$-subscheme if $\deg(X)>f(e,\ell)$ and $H^0(\mathcal{I}_{X/\P^{n+e}}(\ell))=0$?
\end{question}

We showed that there are positive answers for this question in case of codimension two in Proposition \ref{codim2_ND_k} and Example \ref{sp_curve_ND_k}. What about in \textit{higher codimensional} case? (recall that a key ingredient for Proposition \ref{codim2_ND_k}  is `lifting theorem' which is well-established in codimension 2)

The following example tells us that for Question \ref{Q_deg_bd} one needs to assume irreducibility or some conditions on irreducible components of $X$ in general. 

\begin{example}[A non-$\mathrm{ND}(2)$ reduced scheme of arbitrarily large degree] Consider a closed subscheme $X\subset \P^3$ of codimension $2$ defined by the monomial ideal $I_X=(x_0^3, x_0^2x_1, x_0x_1^2, x_1^t, x_0^2x_2)$ for any positive integer $t\geq 4$. Note that $h^0(\mathcal I_{X/\P^3}(2))=0$ and $\deg(X)=t+2\geq 6=\binom{e+2}{2}.$ 
Since $I_X$ is a Borel fixed monomial ideal, we see that $I_{X\cap L/ \P^3}=(x_0^2, x_0x_1^2, x_1^t)$ for a general linear form $L$, which implies that $X$ does not satisfy $\mathrm{ND}(2)$.

If we consider a sufficiently generic distraction $D_{\mathcal L}(I_X)$ of $I_X$ (see \cite{BCR} for details of distraction), then it is of the form
$$D_{\mathcal L}(I_X) = (L_1L_2L_3, L_1L_2L_4, L_1L_4L_5, \prod_{j=1}^{t}{M_j}, L_1L_2L_7)~,$$
where $L_i$ and $M_j$ are generic linear forms for each $1\leq i\leq 7$ and $1\leq j\leq t$. Then $D_{\mathcal L}(I_X)$ defines the union of $t+2$ lines and $3$ points. Using this, we can construct an example of non-$\mathrm{ND}(2)$ algebraic set of arbitrarily large degree.
\end{example}
\medskip

\noindent{\textsf{II. Condition $\mathrm{ND}(\ell)$ and non-negativity of $h$-vector~}} For any closed subscheme $X\subset\P^{n+e}$ of dimension $n$, the Hilbert series of $R_X:=\kk[x_0,\ldots,x_{n+e}]/I_X$ can be written as
\begin{equation}\label{h_series}
H_{R_X}(t)= \sum \big(\dim_\kk (R_X)_i \big) t^i = \frac{h_0+h_1t+ \cdots + h_s t^s}{(1-t)^{n+1}}
\end{equation}
and the $h$-vector $h_0, h_1, \ldots, h_s$ usually contains much information on the coordinate ring $R_X$ and on geometric properties of $X$. One of the interesting questions on the $h$-vector is the one to ask about \textit{non-negativity} of the $h_i$ and it is well-known that every $h_i\ge0$ if $R_X$ is Cohen-Macaulay (i.e. $X$ is ACM). Recently, a relation between Serre's condition $(S_\ell)$ on $R_X$ and non-negativity of $h$-vector has been focused as answering such a question as 
\begin{center}
`~Does Serre's condition $(S_\ell)$ imply $h_0, h_1, \ldots, h_\ell\ge0$~?~'
\end{center}
This was checked affirmatively in case of $I_X$ being a square-free monomial ideal by Murai and Terai \cite{MT}. More generally, in \cite{DMV} Dao, Ma and Varbaro proved the above question is true under some mild singularity conditions on $X$ (to be precise, $X$ has Du Bois singularity in $\ch(\kk)=0$ or $R_X$ is F-pure in $\ch(\kk)=p$). Here, we present an implication of condition $\mathrm{ND}(\ell)$ on this question as follows.

%\begin{proposition}\label{h_i_coeff}
%In the situation as (\ref{h_series}), $h_j$ can be written as
%$$h_j={e+j-1\choose j}-\sum_{i=0}^{j}{n+1\choose i}(-1)^i \dim_\kk (I_X)_{j-i}\quad\textrm{for any $j$.}$$
%\end{proposition}
%\begin{proof}
%
%\end{proof}

\begin{corollary}[$\ND(\ell)$ implies non-negativity of $h$-vector]\label{ND_hvec}
Let $X=\Proj(R_X)$ be any closed subscheme of codimension $e$ in $\P^{n+e}$ over an algebraically closed field $\kk$ with $\ch(\kk)=0$ and $h_i$'s be the $h$-vector of $R_X$ in (\ref{h_series}). Suppose that $X$ has condition $\ND(\ell-1)$. Then, $h_0, h_1, \ldots, h_\ell\ge0$.
\end{corollary}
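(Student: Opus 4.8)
The statement says: if $X$ has condition $\mathrm{ND}(\ell-1)$ then the first $\ell+1$ entries $h_0,\dots,h_\ell$ of the $h$-vector are nonnegative. The natural route is to pass to the generic initial ideal and reduce to a combinatorial fact about the Hilbert series of a Borel-fixed ideal contained in a power of a linear ideal. Concretely, $\mathrm{ND}(\ell-1)$ means, by Proposition~\ref{lem:2017-06-15}, that $\Gin(I_X)\subset (x_0,\dots,x_{e-1})^{\ell}$. Since $R/I_X$ and $R/\Gin(I_X)$ have the same Hilbert function, they have the same $h$-vector, so it suffices to prove the claim for $A:=R/\Gin(I_X)$.

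First I would observe that for a monomial ideal $J\subset (x_0,\dots,x_{e-1})^{\ell}$ the quotient $R/J$ agrees with $R/(x_0,\dots,x_{e-1})^{\ell}$ in all degrees $\le \ell-1$ — more precisely, since every minimal generator of $J$ has degree $\ge \ell$, one has $(R/J)_i = R_i$ for $i\le \ell-1$ — but that only controls $h_0,\dots,h_{\ell-1}$ at best and does not immediately give $h_\ell$. A cleaner approach: compare $A$ with $B:=R/(x_0,\dots,x_{e-1})^{\ell}$, which is ACM (it is the Eagon--Northcott quotient appearing in the proof of Theorem~1.1(a)), hence its $h$-vector $h^B_0,\dots,h^B_{\ell-1}$ is nonnegative (and $h^B_i=0$ for $i\ge\ell$ since $B$ has an $(\ell)$-linear resolution of length $e$ and dimension $n+1$). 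Since $\Gin(I_X)\supset (x_0,\dots,x_{e-1})^{\ell}$ fails in general — it's the reverse containment — the honest comparison is $J = \Gin(I_X) \supseteq$ nothing canonical, so instead I would use the surjection $B \twoheadrightarrow A$... which is backwards too. The correct containment $\Gin(I_X)\subseteq J_0$ gives a surjection $A = R/\Gin(I_X) \twoheadrightarrow R/J_0 = B$, hence $\dim_\kk A_i \ge \dim_\kk B_i$ for all $i$, with equality for $i\le \ell-1$.

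So the heart of the matter is the following purely combinatorial claim about Hilbert series: if $H_A(t) = \sum \dim_\kk A_i\, t^i = (h_0+\dots+h_st^s)/(1-t)^{n+1}$ and $\dim_\kk A_i = \binom{i+n+e}{n+e}$ for $i\le \ell-1$ (the value of the full polynomial ring) while $\dim_\kk A_\ell \ge \binom{\ell+n+e}{n+e} - \dim_\kk(J_0)_\ell$, then $h_0,\dots,h_\ell\ge 0$. One computes $h_j = \sum_{i=0}^{j}(-1)^{j-i}\binom{n+1}{j-i}\dim_\kk A_i$. For $j\le \ell-1$ this equals the corresponding coefficient for the polynomial ring, i.e. $h_j = \dim_\kk R_j - \binom{n+1}{1}\dim_\kk R_{j-1}+\cdots$; but the polynomial ring $R=\kk[x_0,\dots,x_{n+e}]$ has Hilbert series $1/(1-t)^{n+e+1}= (1-t)^{-(n+1)}\cdot(1-t)^{-e}$, so its "$h$-vector" relative to $(1-t)^{n+1}$ is $\sum \binom{i+e-1}{e-1}t^i$, all nonnegative. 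Thus $h_0,\dots,h_{\ell-1}$ are exactly $\binom{e-1}{e-1},\binom{e}{e-1},\dots,\binom{\ell-2+e}{e-1}$, manifestly $\ge 0$. For $h_\ell$: $h_\ell = \dim_\kk A_\ell - \sum_{i<\ell}(-1)^{\ell-1-i}\binom{n+1}{\ell-i}\dim_\kk A_i \cdot(-1)$, and since $\dim_\kk A_\ell \ge \dim_\kk B_\ell$ while the lower terms are forced, $h_\ell \ge h^B_\ell = $ the analogous coefficient for $B$, which is $\ge 0$ because $B$ is Cohen--Macaulay. This is where I expect the only real work: bookkeeping the alternating sum to show $h_\ell(A)\ge h_\ell(B)\ge 0$, using monotonicity $\dim A_\ell \ge \dim B_\ell$ together with the forced equalities $\dim A_i=\dim B_i$ for $i\le \ell-1$... wait, that last equality is false ($\dim A_i = \dim R_i \ne \dim B_i$ for $0<i\le\ell-1$).

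**Revised heart.** Let me instead compare $A$ directly to the polynomial ring $P := \kk[x_e,\dots,x_{n+e}]$ via the standard-monomial description. Because $\Gin(I_X)$ is Borel-fixed and contained in $(x_0,\dots,x_{e-1})^{\ell}$, every standard monomial of $A$ of degree $\le \ell-1$ is arbitrary, and in degree $\ell$ the missing standard monomials are exactly those in $(x_0,\dots,x_{e-1})^{\ell}$ that lie in $\Gin(I_X)$. The key structural input I would extract is: $\dim_\kk A_i \ge \dim_\kk(R/J_0)_i$ for all $i$ with equality in degrees $\le \ell-1$. Then writing $H_A(t) - H_{R/J_0}(t) = \sum_{i\ge\ell} c_i t^i$ with $c_i\ge 0$, and noting $H_{R/J_0}(t)\cdot(1-t)^{n+1} = \sum_{j=0}^{\ell-1} h^{B}_j t^j$ has all coefficients $\ge 0$ (Cohen--Macaulayness of $B$, or directly: $R/J_0$ has a pure $(\ell)$-linear Eagon--Northcott resolution so its $K$-polynomial has the sign pattern making the $h$-vector the positive sequence $\binom{i+e-1}{e-1}$ for $0\le i\le \ell-1$), we get
\[
\Big(H_A(t)-H_{R/J_0}(t)\Big)(1-t)^{n+1} \;=\; \sum_{i\ge\ell}c_i t^i (1-t)^{n+1},
\]
whose coefficient in degree $j\le \ell$ is: $0$ for $j<\ell$ and $c_\ell\ge 0$ for $j=\ell$. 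Hence $h_j(A) = h^B_j \ge 0$ for $j\le \ell-1$ and $h_\ell(A) = h^B_\ell + c_\ell = 0 + c_\ell \ge 0$ (here $h^B_\ell=0$ since $B$'s $h$-vector has length $\ell-1$). That completes the argument, and the only nontrivial ingredient beyond Proposition~\ref{lem:2017-06-15} is the equality $\dim_\kk A_i = \dim_\kk(R/J_0)_i$ for $i\le\ell-1$ — which is immediate since both coincide with $\dim_\kk R_i$ as neither ideal has generators in degree $\le \ell-1$ (for $\Gin(I_X)$ this is the $\mathrm{ND}(\ell-1)$ hypothesis, for $J_0$ it is by definition) — together with the elementary fact that $R/J_0$ is ACM with $h$-vector $\big(\binom{e-1}{e-1},\binom{e}{e-1},\dots,\binom{e+\ell-2}{e-1}\big)$, all nonnegative.

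**Main obstacle.** The delicate point is purely organizational: making sure the "$c_i\ge0$ for $i\ge\ell$" containment is stated at the level of Hilbert functions (from $\Gin(I_X)\subseteq J_0$, giving the surjection $R/\Gin(I_X)\twoheadrightarrow R/J_0$ and hence $\dim_\kk A_i\ge\dim_\kk(R/J_0)_i$ for all $i$), and then that multiplying the nonnegative power series $\sum_{i\ge\ell}c_it^i$ by $(1-t)^{n+1}$ cannot disturb the coefficients in degrees $\le\ell$ beyond contributing $c_\ell$ in degree exactly $\ell$. I do not anticipate a genuine difficulty, only care with the indexing; I would state it as a one-line lemma on power series: if $F(t)=\sum_{i\ge m}c_it^i$ with $c_i\ge 0$ then $[t^j]F(t)(1-t)^{k}=0$ for $j<m$ and $=c_m$ for $j=m$.
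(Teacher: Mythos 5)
Your final argument is correct, and it reaches the same explicit values the paper obtains ($h_j=\binom{e+j-1}{j}$ for $j\le\ell-1$ and $h_\ell=\binom{e+\ell-1}{\ell}-\dim_\kk(I_X)_\ell\ge 0$), but by a genuinely different organization. The paper works directly with the ideal $I_X$: it expands $h_j=\sum_{i=0}^{j}\binom{n+1}{i}(-1)^i r_{j-i}$, substitutes $r_{j-i}=\binom{n+e+j-i}{j-i}-\dim_\kk(I_X)_{j-i}$, collapses the first alternating sum to $\binom{e+j-1}{j}$ via the identity $(1-t)^{n+1}\sum_i\binom{n+e+i}{i}t^i=(1-t)^{-e}$, and then quotes Theorem~\ref{thm A}(a) for the two inputs $(I_X)_{\le\ell-1}=0$ and $\dim_\kk(I_X)_\ell\le\binom{e+\ell-1}{\ell}$. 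You instead pass to $\Gin(I_X)$, invoke Proposition~\ref{lem:2017-06-15} to get $\Gin(I_X)\subseteq J_0=(x_0,\ldots,x_{e-1})^{\ell}$, and compare Hilbert series with the Cohen--Macaulay quotient $R/J_0$, so that $H_A-H_{R/J_0}$ is a nonnegative series supported in degrees $\ge\ell$ and multiplication by $(1-t)^{n+1}$ cannot affect coefficients below degree $\ell$. The two proofs rest on the same underlying fact (your $c_\ell\ge 0$ is exactly the paper's bound on $\dim_\kk(I_X)_\ell$, both coming from $\Gin(I_X)_\ell\subseteq (J_0)_\ell$); what your version buys is that the nonnegativity becomes structurally transparent (a surjection of graded rings plus a one-line power-series lemma) rather than emerging from a binomial computation, at the cost of having to know the $h$-vector of $R/J_0$. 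Note that your intermediate back-and-forth about which containment gives which surjection, and the parenthetical claim that $\dim_\kk A_i\ne\dim_\kk B_i$ for $0<i\le\ell-1$, are red herrings that your ``Revised heart'' correctly discards: since $J_0$ is generated in degree $\ell$, both $A_i$ and $B_i$ equal $R_i$ in degrees $i\le\ell-1$. Only the cleaned-up final paragraph should survive into a written proof.
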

\begin{proof}
Say $r_i=\dim_\kk (R_X)_i$. First of all, by (\ref{h_series}), we have
$$(1-t)^{n+1}\big(r_0+r_1 t+r_2 t^2 +\cdots \big)=h_0 + h_1 t +h_2 t^2 +\cdots~,$$
which implies that $h_0=r_0,~ h_1={n+1\choose 1}(-1) r_0 +r_1,~\cdots, ~ h_j=\sum_{i=0}^{j} {n+1\choose i}(-1)^i r_{j-i}$ for any $j$. Since $r_{j-i}={n+e+j-i\choose j-i}-\dim_\kk (I_X)_{j-i}$, it holds that
\begin{align*}
h_j&=\sum_{i=0}^{j} {n+1\choose i}(-1)^i {n+e+j-i\choose j-i}-\sum_{i=0}^{j}{n+1\choose i}(-1)^i \dim_\kk (I_X)_{j-i} \\
&={e+j-1\choose j}-\sum_{i=0}^{j}{n+1\choose i}(-1)^i \dim_\kk (I_X)_{j-i}\quad\cdots\quad(\ast)~,
\end{align*}
where the last equality comes from comparing $j$-th coefficients in both sides of the identity $$\ds (1-t)^{n+1}\bigg[\sum_{i\ge0}{n+e+i\choose i}t^i \bigg]=\frac{1}{(1-t)^e}~.$$ 

Now, by Theorem \ref{thm A} (a), we know that $\dim_\kk (I_X)_{0}=\dim_\kk (I_X)_{1}=\cdots=\dim_\kk (I_X)_{\ell-1}=0$ and $\dim_\kk (I_X)_{\ell}\le{e+\ell-1\choose \ell}$. So, for any $j\le \ell-1$, by $(\ast)$ we see that $h_j={e+j-1\choose j}\ge0$. Similarly, we obtain that $h_\ell={e+\ell-1\choose \ell}-\dim_\kk (I_X)_{\ell}\ge0$ as we wished.
\end{proof}

Hence, it is natural to ask:

\begin{question}\label{Q_Serre} How are Serre's $(S_\ell)$ on $R_X$ and condition $\mathrm{ND}(\ell)$ on $X$ related to each other?
\end{question}
For example, it would be nice if one could find some implications between the notions under reasonable assumptions on singularities or connectivity of components.

\medskip

\noindent{\textsf{III. Geometric classification/characterization of ACM $d$-linear varieties~} For further development, it is natural and important to consider the boundary cases in Theorem~\ref{thm A} from a \textit{geometric} viewpoint. When $\ell=1$, due to del Pezzo-Bertini classification, we completely understand the extremal case, that is ACM $2$-linear varieties, geometrically; (a cone of) quadric hypersurface, Veronese surface in $\P^5$ or rational normal scrolls. It is also done in category of algebraic sets in \cite{EGHP2}. What about ACM varieties having \textit{$3$-linear} resolution? or \textit{higher $d$-linear} resolution? The followings are first examples of variety with ACM $3$-linear resolution.

\begin{example}[Varieties having ACM $3$-linear resolution]\label{first_example_ACM3Lin} We have
\begin{itemize}
\item[(a)] Cubic hypersurface ($e=1$);
\item[(b)] 3-minors of $4\times 4$ generic symmetric matrix (i.e. the secant line variety $Sec(v_2(\P^3))\subset\P^9$);
\item[(c)] 3-minors of $3\times (e+2)$ sufficiently generic matrices (e.g. secant line varieties of rational normal scrolls);
\item [(d)] $\Sec(v_3(\P^2))$; $\Sec(\P^2 \times \P^1\times \P^1)$;
\end{itemize}
\end{example}

Most of above examples come from taking secants. Unless a hypersurface, are they all the secant varieties of relatively small degree varieties? Recall that any secant variety $Sec(X)$ not equal to the ambient space is always `singular' because $\Sing(Sec(X))\supset X$. But, we can construct examples of \textit{smooth} $3$-linear ACM of low dimension as follows:
 
 \begin{example}[Non-singular varieties with ACM $3$-linear resolution]\label{example_smooth_3-linearACM}
We have 
\begin{itemize}
\item[(a)] (A non-hyperelliptic low degree curve of genus 3 in $\P^3$) For a smooth plane quartic curve $C$ of genus $g=3$. One can re-embed $C$ into $\P^9$ using the complete linear system $|\mathcal O_C(3)|$. Say this image as $\widetilde{C}$. For $\deg\widetilde{C}=12$, $\widetilde{C}\subset\P^9$ satisfies at least property $\mathbf{N}_5$ by the Green-Lazarsfeld theorem. We also know that
\[H^0(\mathcal I_{\widetilde{C}}(2))=H^0(\mathcal O_{\P^9}(2))-H^0(\mathcal O_{\widetilde{C}}(2))={9+2\choose2}-(2\cdot12+1-3)=55-22=33~.\]
Now, take any 6 smooth points on $\widetilde{C}$ and consider inner projection of $\widetilde{C}$ from these points into $\P^3$. Denote this image curve in $\P^3$ by $\overline{C}$. From \cite[proposition 3.6]{HK}, we obtain that
\[H^0(\mathcal I_{\overline{C}}(2)=H^0(\mathcal I_{\widetilde{C}}(2))-(8+7+6+5+4+3)=33-33=0~.\]
In other words, there is no quadric which cuts out $\overline{C}$ in $\P^3$. Since $C$ is non-hyperelliptic, $\overline{C}$ is projectively normal (i.e. ACM). Therefore, $\overline{C}$ is a smooth $\mathrm{ND}(2)$-curve in $\P^3$ and has $\deg \overline{C}=6$ which is equal to $2g$. Using \texttt{Macaulay 2} \cite{M2}, we can also check all these computations including the minimal resolution of $\overline{C}\subset\P^3$. $\overline{C}$ has ACM 3-linear resolution such as \texttt{
\begin{tabular}{l|ccc}
          & 0 & 1   &  2       \\ \hline
    0    & 1 & -    & -        \\
    1   & -  & - & - \\
    2    & -  & 4 & 3
    \end{tabular}}\quad.
    
\item[(b)] (A surface in $\P^6$) Consider a rational normal surface scroll $X=S(4,4)$ in $\P^9$. Its secant line variety $Y=Sec(X)$ is a $5$-fold and has a minimal free resolution as $$\texttt{\begin{tabular}{l|ccccc}
          & 0 & 1   &  2   & 3 & 4    \\ \hline
    0    & 1 & -    & -    & - & -     \\
    1   & -  & - & - & - & -\\
    2    & -  & 20 & 45 & 36 & 10
    \end{tabular}}\quad,$$ which is ACM $3$-linear. Even though $Y$ is singular, as we cut $Y$ by three general hyperplanes $H_1, H_2, H_3$ we obtain a smooth surface $S=Y\cap H_1 \cap H_2 \cap H_3$ of degree $15$ in $\P^6$ whose resolution is same as above (one can check all the computations using \cite{M2}).
    \end{itemize}
\end{example}

It is interesting to observe that every variety of dimension $\ge2$ in Example \ref{first_example_ACM3Lin} and \ref{example_smooth_3-linearACM} has a \textit{determinantal} presentation for its defining ideal.
 
 \begin{question}\label{Q_geom_ACM_Lin}
Can we give a geometric classification or characterization of ACM $d$-linear varieties for $d\ge3$? Do they all come from (a linear section of) secant construction except very small (co)dimension? In particular, does it always have a determinantal presentation if  $X$ is ACM $3$-linear variety and $\dim X\ge2$?
\end{question}

Finally, we present some example as we discussed in Remark \ref{thm A_recover_HK}. 

\begin{example}[Minimal degree of $\ell$-th kind ($\ell\ge2$) does not guarantee ACM linear resolution]\label{example_l-th_minimal}
In contrast with $\ell=1$ case, a converse of Theorem \ref{thm A} (b)
\begin{center}
`the equality $\deg(X)={e+\ell\choose \ell}$ with $\mathrm{ND}(\ell)$ implies that $X$ has ACM $(\ell+1)$-linear resolution'
\end{center}
does not hold for $\ell\ge2$ (note that, in the case of classical minimal degree, the statement does hold under $\mathrm{ND}(1)$-condition once we assume irreducibility or some connectivity condition on components of $X$ such as `linearly joined' in \cite{EGHP2}). 

By manipulating Gin ideals and distraction method, one could generate many reducible examples of such kind. Even though $X$ is irreducible, we can construct a counterexample. As a small example, using \cite{M2} we can verify that a smooth rational curve $C$ in $\mathbb P^3$ of degree $6$, a (isomorphic) projection of a rational normal curve in $\P^6$ from 3 random points, has Betti table as in Figure \ref{betti_l-th_minimal}.
\begin{figure}[!htb]
%\centering
\begin{subfigure}{.44\textwidth}
\centering
\texttt{\begin{tabular}{l|cccccccc}
          & 0 & 1   &  2  &3     \\ \hline
    0    & 1 & -    & -    &-    \\
    1   & -  & - & - &-\\
    2   & -  & 1 & - &-\\
    3    & -  & 6 & 9 & 3
    \end{tabular}}
\end{subfigure}
%\hspace*{\fill}
\begin{subfigure}{.44\textwidth}
\centering
\texttt{\begin{tabular}{l|ccccccc}
          & 0 & 1   &  2       \\ \hline
    0    & 1 & -    & -        \\
    1   & -  & - & - \\
    2   & -  & 4 & 3 \\
    3   & -  & -  & -
    \end{tabular}}
    \end{subfigure}~.
    \caption{Betti tables of $C$ and $C\cap H$}
    \label{betti_l-th_minimal}
\end{figure}

Note that $C$ satisfies condition $\ND(2)$ and is of minimal degree of 2nd kind (i.e. $\deg(C)={2+2\choose 2}$), but its resolution is still not 3-linear.
\end{example}

\end{document}